\newtheorem{theo}{Theorem}[section]
\newtheorem{ques}[theo]{Question}
\newtheorem{lem}[theo]{Lemma}
\newtheorem{prop}[theo]{Proposition}
\newtheorem{conj}[theo]{Conjecture}
\begin{document}

\title{On problems about judicious bipartitions of graphs}

\author{Yuliang Ji\thanks{School of Mathematical Sciences, University of Science and Technology of China,
Hefei, Anhui 230026, China. Email: jiyl@mail.ustc.edu.cn.}
~~~~~
Jie Ma\thanks{School of Mathematical Sciences, University of Science and Technology of China,
Hefei, Anhui 230026, China. Email: jiema@ustc.edu.cn. Partially supported by NSFC projects 11501539 and 11622110.}
~~~~~
Juan Yan\thanks{College of Mathematics and Systems Science, Xinjiang University,
Urumqi, Xinjiang 830046, China. Email: yanjuan207@163.com. Partially supported by NSFC project 11501486.}
~~~~~
Xingxing Yu\thanks{School of Mathematics, Georgia Institute of Technology, Atlanta, GA 30332, USA. Email: yu@math.gatech.edu.
Partially supported by NSF grants DMS--1265564 and DMS-1600387.}
}

\date{}
\maketitle {\flushleft\large\bf Abstract:}
Bollob\'{a}s and Scott \cite{1} conjectured that every graph $G$ has a balanced bipartite spanning subgraph
$H$ such that for each $v\in V(G)$, $d_H(v)\ge (d_G(v)-1)/2$. In this
paper, we show that every graphic sequence has a realization for which this Bollob\'{a}s-Scott conjecture holds,
confirming a conjecture of Hartke and Seacrest \cite{HS}.
On the other hand, we give an infinite family of counterexamples to this Bollob\'{a}s-Scott conjecture, which
indicates that $\lfloor (d_G(v)-1)/2\rfloor$ (rather than
$(d_G(v)-1)/2$) is probably the correct lower bound.
We also study bipartitions $V_1, V_2$ of graphs with a fixed number of edges.
We provide a (best possible) upper bound on
$e(V_1)^{\lambda}+e(V_2)^{\lambda}$ for any real $\lambda\geq 1$
(the case $\lambda=2$ is a question of Scott \cite{Sc05}) and
answer a question of Scott \cite{Sc05} on
$\max\{e(V_1),e(V_2)\}$.

\begin{flushleft}
\textbf{Keywords:} bipartition; bisection; degree sequence; complete
$k$-partite graph; $\ell_\lambda$-norm

\end{flushleft}
\textbf{AMS classification}: 05C07, 05C70

\newpage

\section{Introduction}
For any positive integer $k$, let $[k]:=\{1,\ldots, k\}$.
Let $G$ be a graph and $V_1,\ldots, V_k$ be a
partition of $V(G)$. When $k=2$, such a partition is said to be a {\it
bipartition} of $G$.
A subgraph $H$ of a graph $G$ is said to be a  {\it bisection} of $G$ if $H$ is a bipartite spanning subgraph of $G$ and the partition sets of $H$
differ in size by at most one.
For $i,j\in [k]$, we use $e(V_i)$ to denote
the number of edges of $G$ with both ends in $V_i$ and use $e(V_i,
V_j)$ to denote the number of edges between $V_i$ and $V_j$.
\emph{Judicious partitioning problems} for graphs ask for partitions of graphs
that  bound a number of quantities simultaneously, such as
all $e(V_i)$ and $e(V_i, V_j)$. There has been extensive research on this type of problems over the past two decades.

As an attempt to better understand how edges of a graph are
distributed,
we study several judicious bipartitioning problems.  Specifically,  we study a conjecture of Bollob\'{a}s and Scott \cite{1} and its
degree sequence version conjectured by Hartke and Seacrest
\cite{HS}. We also study
two questions of Scott \cite{Sc05} on bipartitions $V_1,V_2$ of a
graph with $m$ edges, bounding $e(V_1)^2+e(V_2)^2$ and
$\max\{e(V_1),e(V_2)\}$ in terms of $m$.

For a graph $G$ and for any $v\in V(G)$, we use $d_G(v)$ to denote the
degree of the vertex $v$ in $G$.
It is well known that if $H$ is a maximum bipartite spanning subgraph of  a graph $G$, then
$d_H(v)\ge d_G(v)/2$ for each $v\in V(G)$.
This, however, may not be true if one requires $H$ to be a bisection, as observed by Bollob\'{a}s and Scott \cite{1} by considering the complete bipartite graphs $K_{2\ell+1,m}$ for $m\ge 2\ell+3$.
In an attempt to obtain a similar result for  bisections, Bollob\'{a}s and Scott \cite{1} conjectured that every graph $G$ has a bisection $H$ such that
\begin{equation}\label{conj:BS}
d_H(v)\ge (d_G(v)-1)/2 \text{~~~ for all~}v\in V(G).
\end{equation}
This conjecture for regular graphs was made by H\"aggkvist \cite{2} in
1978, and variations of this problem were studied by Ban and Linial  \cite{BL}.

Hartke and Seacrest~\cite{HS} studied a degree sequence version of this Bollob\'{a}s-Scott conjecture.
A nondecreasing sequence $\pi$ (of nonnegative integers) is said to be
{\it graphic} if it is the degree sequence of some finite simple graph $G$; and
such $G$ is called a $realization$ of the sequence $\pi$.
Hartke and Seacrest \cite{HS} proved that for any graphic sequence $\pi$ with even length,
$\pi$ has a realization $G$ which admits a bisection $H$ such that for all
$v\in V(G)$,  $d_H(v)\ge \lfloor (d_G(v)-1)/2\rfloor$.
They further conjectured that  for any graphic sequence $\pi$ with even length,
$\pi$ has a realization $G$ for which (\ref{conj:BS}) holds.
We prove this Hartke-Seacrest conjecture  for all graphic sequences.

For a graph $G$ and a labeling of its vertices $V(G)=\{v_1,\dots,
v_n\}$, we define the {\it parity bisection} of $G$ to be the
bisection with partition sets  $V_1$ and $V_2$,
where $V_i=\{v_j\in V(G): j\equiv i \mod 2\}$ for each $i\in [2]$, and
$E(H)=\{uv\in E(G): u\in V_1 \mbox{ and } v\in V_2\}$.

\begin{theo}\label{thm:HS}
Let $\pi=(d_1,\dots,d_n)$ be any graphic sequence with $d_1\ge \cdots \ge d_n$.
Then there exists a realization $G$ of $\pi$ with $V(G)=\{v_1,\dots, v_n\}$
and $d_G(v_i)=d_i$ for $i\in [n]$, such that if $H$ denotes the parity
bisection of $G$ then
$d_H(v_i)\ge (d_G(v_i)-1)/2$ for $i\in [n]$.
\end{theo}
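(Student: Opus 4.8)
The plan is to argue by an extremal choice of realization combined with degree-preserving edge switches. First I would reformulate the target inequality: since $d_H(v_i)$ is an integer, the bound $d_H(v_i)\ge (d_i-1)/2$ is equivalent to $d_H(v_i)\ge \lfloor d_i/2\rfloor$, that is, each vertex $v_i$ should have \emph{at most} $\lceil d_i/2\rceil$ neighbours inside its own side ($V_1$ or $V_2$) of the parity bisection. Thus it suffices to produce a realization $G$ of $\pi$ with the prescribed labels $d_G(v_i)=d_i$ in which no vertex has strictly more than half of its neighbours on its own side.

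Among all realizations of $\pi$ with $d_G(v_i)=d_i$ for all $i$, I would choose one maximizing the number of crossing edges $e(V_1,V_2)$ of the parity bisection, and suppose for contradiction that some vertex, say $v\in V_1$, violates the bound. Then $v$ has a neighbour $w\in V_1$ and strictly fewer neighbours in $V_2$ than in $V_1$. The basic tool is the $2$-switch: given a within-part edge $vw$ with $v,w\in V_1$ and a within-part edge $xy$ with $x,y\in V_2$ such that $vy,wx\notin E(G)$, replacing $\{vw,xy\}$ by $\{vy,wx\}$ preserves every degree and raises $e(V_1,V_2)$ by exactly $2$. A short case check shows this is essentially the only local move that strictly increases the number of crossing edges while deleting a same-side edge at $v$. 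Hence at the maximum no within-$V_1$ edge and within-$V_2$ edge can be simultaneously ``independent'' in this sense, which already forces strong adjacency constraints between any dense spots of $V_1$ and of $V_2$.

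The second ingredient is a \emph{neutral} redistribution switch. If $v\in V_1$ is deficient and $u\in V_2$ is a non-neighbour of $v$ whose neighbour $z$ lies in $V_1$, then replacing $\{vw,uz\}$ by $\{vu,wz\}$ (when $wz\notin E(G)$) leaves $e(V_1,V_2)$ unchanged but increases the crossing degree of $v$ by one at the expense of $z$. Chaining such moves along an alternating path lets me transfer crossing degree from vertices with slack to deficient ones, so the remaining question becomes a feasibility (defect-Hall / Gale--Ryser) problem: can the available crossing degree be distributed so that every $v_i$ receives at least $\lfloor d_i/2\rfloor$? Here the hypotheses that $\pi$ is graphic and that the labels are sorted, so that the large degrees alternate between $V_1$ and $V_2$, are exactly what I expect to supply the required Hall-type inequalities.

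I expect the main obstacle to be the degenerate configuration in which the extremal realization makes one side, say $V_2$, (almost) independent, so that no within-$V_2$ edge is available and the $+2$ switch cannot be applied. In that regime every vertex of $V_2$ already meets the bound, and the problem collapses to distributing the crossing edges among $V_1$; I would resolve it directly by a Gale--Ryser or max-flow argument on the bipartite crossing graph, using the balanced, sorted degree profile to verify the distribution inequalities. Handling this case carefully---and checking that the extremal and redistribution switches never conflict with the simplicity of $G$---is where the real work lies, while the global switching argument above reduces everything else to it.
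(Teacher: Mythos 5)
Your proposal takes a genuinely different route from the paper, which proves the theorem by induction on $n$: it picks two equal consecutive entries $d_\ell=d_{\ell+1}$, applies the Kleitman--Wang/Hartke--Seacrest ``laying off with order'' operation twice to get a graphic sequence of length $n-2$, invokes the induction hypothesis, and then reattaches two vertices with an explicit, case-by-case choice of neighbours governed by the structure of the index sets whose degrees dropped by one or by two. Your extremal-plus-switching strategy is a reasonable thing to try, and your opening reduction (that $d_H(v)\ge (d_G(v)-1)/2$ is equivalent to $d_H(v)\ge\lfloor d_G(v)/2\rfloor$) is correct. However, as written the argument has genuine gaps at every load-bearing point, so it does not constitute a proof.

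Concretely: (1) You never establish that the extremal realization satisfies $2e(V_1,V_2)\ge\sum_i\lfloor d_i/2\rfloor$. Without this counting inequality no amount of redistribution can succeed, and proving it for the \emph{fixed} parity bipartition (where you cannot move vertices across the cut, only re-choose the realization) is not a standard max-cut fact; it is essentially as hard as the theorem itself. (2) Both your $+2$ switch and your neutral switch require specific non-edges ($vy,wx\notin E$, resp.\ $wz\notin E$); in dense realizations these may simply not exist, and you defer exactly this issue (``checking that the \dots switches never conflict with the simplicity of $G$'') to ``the real work.'' (3) The appeal to Gale--Ryser or max-flow is not a routine application: you must simultaneously realize the prescribed labelled degree sequence, control the crossing degrees vertex by vertex, and keep the graph simple, and no Hall-type condition is actually stated or verified -- you only say you ``expect'' the sorted degree profile to supply it. (4) Chaining neutral switches leaves $e(V_1,V_2)$ unchanged, so the extremal choice gives no potential function guaranteeing termination or a contradiction; some additional monotone quantity would be needed and none is identified. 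Each of these is a missing idea rather than a missing detail, so the proposal is a plausible program, not a proof.
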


\noindent The bound in Theorem \ref{thm:HS} is best possible, as shown
by the following example given by
Hartke and Seacrest~\cite{HS}.
Let $G$ be the join of a clique $K$ on $k$ vertices and an independent
set $I$ on $n-k$ vertices, where $n$ is even and $k<n/2$ is odd.
It is not hard to show that $G$ in fact is the unique realization of
the sequence $\pi=(d_1,\dots,d_n)$ with $d_1=\cdots=d_k=n-1$ and $d_{k+1}=\cdots=d_n=k$.
Let $H$ be an arbitrary bisection of $G$ with parts $A, B$ and, without loss of generality,
assume that $|A\cap V(K)|\le k/2$.  Since $k<n/2$, there must exist a
vertex $v\in B\cap I$.  So $d_H(v)=|A\cap V(K)|\le \lfloor
k/2\rfloor$. Since $d_G(v)=k$ and $k$ is odd, we see that 
$d_H(v)\le (d_G(v)-1)/2$.

\medskip


The second result in this paper gives indication  that perhaps the
lower bound in the original
Bollob\'{a}s-Scott conjecture was meant to be
$\lfloor(d_G(v)-1)/2\rfloor$ (rather than $(d_G(v)-1)/2$).

\begin{prop}\label{prop:BS-3partite}
Let $r_1,r_2,r_3$ be pairwise distinct odd integers such that for every $i\in [3]$,
$r_i\notin\left\{1, \lfloor(r_1+r_2+r_3)/2\rfloor, \lceil(r_1+r_2+r_3)/2\rceil\right\}$.
Then for any bisection $H$ of the complete 3-partite graph
$G:=K_{r_1,r_2,r_3}$, there always exists a vertex $v$ with
$d_H(v)<(d_G(v)-1)/2$.
\end{prop}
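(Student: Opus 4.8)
The plan is to argue by contradiction: suppose that some bisection $H$ of $G=K_{r_1,r_2,r_3}$ satisfies $d_H(v)\ge (d_G(v)-1)/2$ for \emph{every} vertex $v$, and derive a contradiction with the stated hypotheses on $r_1,r_2,r_3$. Write $V^{(1)},V^{(2)},V^{(3)}$ for the three parts of $G$, so $|V^{(i)}|=r_i$ and $d_G(v)=n-r_i$ for each $v\in V^{(i)}$, where $n:=r_1+r_2+r_3$ is odd (a sum of three odd integers). Let $A,B$ be the parts of $H$; since $n$ is odd, exactly one of them has $(n+1)/2$ vertices, and by symmetry of the two parts I may assume without loss of generality that $|A|=(n+1)/2$ and $|B|=(n-1)/2$. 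For each $i$ set $a_i:=|A\cap V^{(i)}|$, so that $\sum_i a_i=|A|=(n+1)/2$.

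First I would express the $H$-degrees in terms of the $a_i$: a vertex $v\in V^{(i)}\cap A$ has $d_H(v)=|B|-(r_i-a_i)$, while a vertex $v\in V^{(i)}\cap B$ has $d_H(v)=|A|-a_i$. Substituting these into the assumed bound $d_H(v)\ge (n-r_i-1)/2$ and using that $a_i$ is an integer while $r_i$ is odd (which forces the rounding), I obtain the pair of conclusions: whenever $V^{(i)}$ meets $A$ then $a_i\ge (r_i+1)/2$, and whenever $V^{(i)}$ meets $B$ then $a_i\le (r_i+1)/2$. Hence for each $i$ exactly one of three things occurs: $V^{(i)}\subseteq B$ (so $a_i=0$), $V^{(i)}$ is split between the two sides (so $a_i=(r_i+1)/2$), or $V^{(i)}\subseteq A$ (so $a_i=r_i$). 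In short, $a_i\in\{0,(r_i+1)/2,r_i\}$.

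Next I would record the configuration by the partition of $\{1,2,3\}$ into the index sets $L$ (``low'', $a_i=0$), $M$ (``split''), and $K$ (``high'', $a_i=r_i$). Plugging the three discrete values into $\sum_i a_i=(n+1)/2$ and clearing denominators collapses everything to the single clean identity $\sum_{i\in K}r_i-\sum_{i\in L}r_i=1-|M|$. The endgame is then a short finite case analysis over the sizes of $K,M,L$. I expect most cases to die at once: $|M|=3$ forces $0=-2$; $|M|=2$ forces either a negative value or $r_k=1$; $|M|=1$ forces a zero/negative sum or $r_j=r_k$ for two distinct indices; and for $|M|=0$ the ``all-high'' and ``all-low'' splits are impossible by positivity. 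Exactly two configurations genuinely survive, namely $|K|=2,|L|=1$, which forces $r_k=(n-1)/2=\lfloor n/2\rfloor$, and $|K|=1,|L|=2$, which forces $r_i=(n+1)/2=\lceil n/2\rceil$. Both of these land on values forbidden by the hypothesis, so every case is contradictory and the proof is complete.

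The main obstacle here is organizational rather than deep. One must keep the integrality and parity bookkeeping exactly right, since it is the oddness of each $r_i$ together with the odd total $n$ that pins $a_i$ to the three admissible values and that makes the two bad residues land precisely on $\lfloor n/2\rfloor$ and $\lceil n/2\rceil$. One must also check that the three forbidden values $\{1,\lfloor n/2\rfloor,\lceil n/2\rceil\}$ match up one-to-one with the surviving bad configurations, so that no hypothesis is wasted and none is missing; in particular the ``pairwise distinct'' condition is what eliminates the $|M|=1$ case and the ``$r_i\neq 1$'' condition is what eliminates the $|M|=2$ case.
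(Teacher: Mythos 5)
Your proof is correct, and the case analysis is complete: the identity $\sum_{i\in K}r_i-\sum_{i\in L}r_i=1-|M|$ follows from $\sum_i a_i=(n+1)/2$ exactly as you say, and the eight sign patterns for $(|K|,|M|,|L|)$ are each killed by positivity, by $r_i\neq 1$, by pairwise distinctness, or by $r_i\notin\{\lfloor n/2\rfloor,\lceil n/2\rceil\}$, so every hypothesis is used and none is missing. The route is different from the paper's in packaging rather than in substance: the paper first proves a general characterization (Proposition~\ref{t2}) of which complete multipartite graphs of odd order admit a good bisection, in terms of the existence of a ``good subset'' of the parts, and then specializes to $k=3$ with all parts odd; your sets $M$ and $K\cup L$ correspond to their crossing parts and their set ${\cal A}$, and your identity is precisely their condition $s({\cal A}')=s({\cal A})/2+(|M|-1)/2$ read off directly. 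What you gain is a short, self-contained argument that avoids the machinery of Lemmas~\ref{l1} and~\ref{l2}; what the paper's detour buys is the general statement for arbitrary $K_{r_1,\dots,r_k}$, which they want anyway. One small point of hygiene: since a bisection is only required to be \emph{a} bipartite spanning subgraph with parts $A,B$, not the maximal one, your formulas $d_H(v)=|B|-(r_i-a_i)$ and $d_H(v)=|A|-a_i$ should be inequalities ($\le$); as you are bounding $d_H(v)$ from below by hypothesis and from above by these counts, the argument is unaffected, but the equalities as written are not literally justified.
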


\noindent This result will follow  from a more general result, Proposition~\ref{t2}, on all
complete multipartite graphs. We remark here that, for each complete multipartite graph $G$, it is
easy (as we will see in Section 3) to find a bisection $H$ of $G$
such that 
$d_H(v)\ge \lfloor (d_G(v)-1)/2\rfloor$ for all $v\in V(G)$.
However, for general graphs, even the following weaker version of the
Bolloba\'{a}s-Scott conjecture seems
quite difficult to prove (or disprove).

\begin{conj}
There exists some absolute constant $c>0$ such that every graph $G$
has a bisection $H$ with $d_H(v)\ge (d_G(v)-c)/2$  for all $v\in V(G)$.
\end{conj}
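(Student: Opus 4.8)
The plan is to reformulate the statement in terms of degree discrepancies and then attack it via a maximum-cut-type extremal configuration. Since a bisection $H$ is completely determined by its balanced bipartition $(V_1,V_2)$ (with $H$ consisting of the crossing edges), for a vertex $v\in V_i$ write $s(v)$ for the number of neighbours of $v$ inside $V_i$ and $t(v)=d_H(v)$ for the number in the other part, so that $d_G(v)=s(v)+t(v)$. A short computation shows that the required inequality $d_H(v)\ge (d_G(v)-c)/2$ is equivalent to $f(v)\le c$, where $f(v):=s(v)-t(v)$ is the \emph{discrepancy} of $v$. Thus the conjecture is equivalent to the assertion that every graph has a balanced bipartition in which every discrepancy is at most some absolute constant $c$. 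A useful first reduction is that vertices of degree at most $c$ are automatically satisfied (as $d_H(v)\ge 0$), so only the high-degree vertices need to be controlled.

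The natural starting point is a balanced bipartition $(V_1,V_2)$ maximizing the number of crossing edges $e(V_1,V_2)$. Swapping a pair $u\in V_1$, $v\in V_2$ (that is, moving $u$ to $V_2$ and $v$ to $V_1$) changes the cut by exactly $f(u)+f(v)$ when $uv\notin E(G)$ and by $f(u)+f(v)+2$ when $uv\in E(G)$. Maximality therefore yields
\begin{equation*}
f(u)+f(v)\le 0\qquad\text{for all } u\in V_1,\ v\in V_2,
\end{equation*}
with the stronger bound $f(u)+f(v)\le -2$ across adjacent pairs. Setting $M_i:=\max_{v\in V_i}f(v)$, this gives $M_1+M_2\le 0$, so the two sides cannot simultaneously contain high-discrepancy vertices. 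Unfortunately this controls only the \emph{sum} $M_1+M_2$ and not the individual maxima, so single-swap optimality alone does not bound any one discrepancy by a constant.

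To push beyond this I would exploit the fact that maximality also forbids improving swaps of equal-size sets $S\subseteq V_1$ and $T\subseteq V_2$, and try to turn a single over-saturated vertex into a contradiction: if some $u\in V_1$ has $f(u)$ large, then (using $M_1+M_2\le 0$) every vertex of $V_2$ has very negative discrepancy, i.e.\ sends most of its edges across, and I would attempt to locate a bounded alternating structure around $u$ together with a matching set $T\subseteq V_2$ whose exchange strictly increases the cut. An alternative, more direct route is to start from an \emph{unbalanced} maximum cut, which already guarantees $f(v)\le 0$ for every $v$, and then rebalance by moving $\tfrac12\bigl||V_1|-|V_2|\bigr|$ vertices from the larger side; the objective is to choose the moved set so that its edges to the smaller side are not concentrated on any single vertex, forcing each discrepancy to rise by only $O(1)$.

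The hard part will be to bound the rebalancing damage \emph{uniformly}, i.e.\ independently of both $n$ and the maximum degree. This is exactly where the standard tools break down: a uniformly random balanced bipartition only guarantees $d_H(v)\ge (d_G(v)-O(\sqrt{d_G(v)}))/2$, and the $\Theta(\sqrt{d_G(v)})$ loss diverges with the degree rather than remaining bounded, while single-swap local optimality merely bounds $M_1+M_2$. Certifying that either the global maximum cut, or a bounded-length sequence of augmenting swaps, forces every discrepancy below an absolute constant seems most delicate precisely for dense, highly structured graphs --- the regime in which the counterexamples of Proposition~\ref{prop:BS-3partite} to the constant $c=1$ already live. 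I would therefore first calibrate the approach on complete multipartite graphs, both to pin down the smallest achievable constant and to test whether the augmenting-swap repair can be guaranteed to terminate after a bounded number of exchanges.
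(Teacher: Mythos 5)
There is no proof in the paper to compare yours against: the statement you were given is presented in the paper as an open conjecture, introduced with the remark that even this weakened form of the Bollob\'as--Scott conjecture ``seems quite difficult to prove (or disprove).'' So the benchmark is not a hidden argument you failed to reproduce; the only question is whether your proposal settles the problem, and it does not --- as you yourself concede in your last two paragraphs. What is correct in your write-up: the reformulation ($d_H(v)\ge (d_G(v)-c)/2$ is equivalent to the discrepancy bound $f(v)=s(v)-t(v)\le c$), the swap computation, and the consequence $M_1+M_2\le 0$ for a balanced maximum cut.

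The gap is not merely that the hard step is unfinished; both of your proposed starting configurations already fail, with \emph{unbounded} loss, on the very example the paper cites, $K_{2\ell+1,m}$ with $m\ge 2\ell+3$. Write $a$ for the number of vertices of the $(2\ell+1)$-part placed in $V_1$ in a balanced bipartition; the cut size is then $2a^2-2(2\ell+1)a+\mathrm{const}$, a convex quadratic in $a$, so the balanced \emph{maximum} cut takes $a=0$ or $a=2\ell+1$, i.e.\ it puts the whole $(2\ell+1)$-part on one side. Every vertex of the independent part sharing that side then has discrepancy exactly $2\ell+1$, which is unbounded in $\ell$; thus the balanced max cut is swap-optimal (indeed globally optimal) yet violates the desired conclusion for every constant $c$, so swap-optimality is the wrong invariant and no ``bounded augmenting exchange'' argument starting from it can exist. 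The same example defeats the rebalancing route: the unbalanced max cut is the natural bipartition, and any vertex moved across to restore balance carries its entire neighborhood onto its own side, again acquiring discrepancy $2\ell+1$. The bisection that actually works here --- splitting the $(2\ell+1)$-part as $\ell$ and $\ell+1$ --- is far from either max cut both in cut value and in edit distance, so it cannot be reached by a bounded perturbation of one. Your closing suggestion to calibrate on complete multipartite graphs is sensible and is essentially what the paper does in Section~3, where Proposition~\ref{t2} characterizes exactly which complete multipartite graphs admit a bisection good to within $c=1$ and Proposition~\ref{prop:BS-3partite} extracts counterexamples; but that analysis yields necessary conditions and counterexamples, not a proof technique, and the conjecture remains open.
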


We now turn our discussion to problems on general  bipartitions.
Answering a question of Erd\H{o}s, Edwards \cite{ed} showed in 1973
that every graph with $m$ edges admits a bipartition $V_1,V_2$ such that
$e(V_1,V_2)\ge m/2+t(m)/2$,  where
$$t(m):=\sqrt{m/2+1/16}-1/4.$$
This bound is best possible for the complete graphs of odd order.
Bollob\'{a}s and Scott \cite{bs99} extended Edwards' bound by showing that
every graph $G$ with $m$ edges has a bipartition $V_1, V_2$ simultaneously satisfying
$e(V_1,V_2)\ge m/2+t(m)/2$ and
$\max\{e(V_1),e(V_2)\}\le m/4+t(m)/4$,
where both bounds are tight for the complete graphs of odd order.

Scott \cite{Sc05} provided an interesting viewpoint by introducing
norm for partitions.
For a real number $\lambda>0$ and a bipartition $V_1, V_2$ of a graph $G$, define {\it the $\ell_\lambda$-norm} of
$(V_1,V_2)$ to be $\left(e(V_1)^\lambda+e(V_2)^\lambda\right)^{1/\lambda}$.
Then to maximize $e(V_1,V_2)$ is equivalent to minimize the $\ell_1$-norm of $(V_1,V_2)$,
while minimizing $\max\{e(V_1),e(V_2)\}$ is the same as  minimizing the $\ell_\infty$-norm of $(V_1,V_2)$.
It is natural to consider other norms. In particular,
Scott asked for the maximum of
$$\min_{V(G)=V_1\cup V_2} e(V_1)^2+e(V_2)^2$$ over graphs $G$ with $m$
edges, see Problem 3.18 in \cite{Sc05}.
We provide an answer to this question by proving the following general result.
\begin{theo}\label{thm:norm}
Let $m$ be any positive integer and $\lambda\geq 1$ be any real number.
Then, for any graph $G$ with $m$ edges,
$$\min_{V(G)=V_1\cup V_2} e(V_1)^\lambda+e(V_2)^\lambda\le {t(m) \choose 2}^\lambda+{t(m)+1 \choose 2}^\lambda.$$
Moreover, the equality holds if and only if $G$ is a complete graph of odd order.
\end{theo}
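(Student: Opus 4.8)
The plan is to reduce the statement to the simultaneous bipartition bound of Bollob\'as and Scott \cite{bs99} together with a short convexity estimate. Writing $t:=t(m)$, I would first record the two arithmetic identities
\[
\binom{t}{2}+\binom{t+1}{2}=t^2=\frac{m}{2}-\frac{t(m)}{2},\qquad \binom{t+1}{2}=\frac{t(t+1)}{2}=\frac{m}{4}+\frac{t(m)}{4},
\]
which follow directly from $t(m)=\tfrac14(\sqrt{8m+1}-1)$, i.e.\ from $2t^2+t=m$. Thus the right-hand side of Theorem \ref{thm:norm} equals $A^\lambda+(S-A)^\lambda$ with $S=t^2$ and $A=\binom{t+1}{2}$, where $S-A=\binom{t}{2}$.

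Next I would invoke the Bollob\'as--Scott theorem \cite{bs99}: $G$ has a bipartition $V_1,V_2$ with $e(V_1,V_2)\ge m/2+t(m)/2$ and $\max\{e(V_1),e(V_2)\}\le m/4+t(m)/4$. By the identities above this single partition satisfies $e(V_1)+e(V_2)=m-e(V_1,V_2)\le S$ and $\max\{e(V_1),e(V_2)\}\le A$ simultaneously. It then remains to prove the following elementary estimate, which I expect to be routine: for $\lambda\ge 1$ and reals $x,y\ge 0$ with $x+y\le S$ and $\max\{x,y\}\le A$, one has $x^\lambda+y^\lambda\le \binom{t}{2}^\lambda+\binom{t+1}{2}^\lambda$. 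Applying it to $(x,y)=(e(V_1),e(V_2))$ yields the desired upper bound on the minimum.

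For the estimate I would argue by convexity: the feasible set $R=\{(x,y):x,y\ge 0,\ x+y\le S,\ x\le A,\ y\le A\}$ is a convex polygon and, for $\lambda\ge 1$, the function $f(x,y)=x^\lambda+y^\lambda$ is convex, so its maximum on $R$ is attained at a vertex. Since $A\le S\le 2A$ (both inequalities reducing to $t\ge 1$), the vertices are $(0,0)$, $(A,0)$, $(0,A)$ and the two points $(A,S-A)$, $(S-A,A)$ where the line $x+y=S$ meets $x=A$ and $y=A$; comparing the values shows the maximum is $A^\lambda+(S-A)^\lambda=\binom{t+1}{2}^\lambda+\binom{t}{2}^\lambda$. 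Tightness for $G=K_{2t+1}$ (when $t$ is a positive integer, so $m=\binom{2t+1}{2}$) is a direct computation: every bipartition of $K_{2t+1}$ into parts of sizes $p,2t+1-p$ has value $\binom{p}{2}^\lambda+\binom{2t+1-p}{2}^\lambda$, which by convexity is minimized at the balanced split $p\in\{t,t+1\}$ and equals the right-hand side.

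The hard part is the equality characterization. For the converse I would trace the equality conditions back through the argument: if the minimum equals the bound, then the Bollob\'as--Scott partition above must already attain the bound. For $\lambda>1$, strict convexity of $f$ forces $(e(V_1),e(V_2))=(\binom{t+1}{2},\binom{t}{2})$ up to swapping; for $\lambda=1$ it forces $e(V_1)+e(V_2)=t^2$. In either case $t$ must be a positive integer (otherwise $t^2\notin\mathbb{Z}$ or $\binom{t+1}{2}\notin\mathbb{Z}$ and the inequality is automatically strict, consistent with the nonexistence of an odd complete graph on $m$ edges), and Edwards' bound holds with equality, $e(V_1,V_2)=m/2+t/2$. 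The main obstacle is then to show that equality in the Bollob\'as--Scott (equivalently Edwards) bound, together with $\max\{e(V_1),e(V_2)\}=\binom{t+1}{2}$, forces $G\cong K_{2t+1}$. I expect this to require a careful structural analysis of the extremal graphs for Edwards' inequality rather than a black-box citation: I would either reprove the bound via a locally optimal partition, so that equality pins down the within-part versus across-part degree of every vertex, or appeal to the known equality case, while separately handling harmless features such as isolated vertices and the small cases where $\binom{t}{2}=0$.
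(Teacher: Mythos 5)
Your proposal is correct and follows essentially the same route as the paper: apply the Bollob\'as--Scott judicious bipartition theorem to obtain a single bipartition with $e(V_1)+e(V_2)\le t(m)^2$ and $\max\{e(V_1),e(V_2)\}\le\binom{t(m)+1}{2}$, then finish with an elementary optimization (the paper uses monotonicity of $x\mapsto (t(m)^2-x)^\lambda+x^\lambda$ on $[\binom{t(m)}{2},t(m)^2/2]$ where you use convexity over the feasible polygon, which is the same computation). The equality characterization you flag as the main obstacle is resolved in the paper by exactly your second suggested route: the form of the Bollob\'as--Scott lemma it cites already includes the clause that if every bipartition meeting both bounds satisfies $\max\{e(V_1),e(V_2)\}=m/4+t(m)/4$ then $G$ is a complete graph of odd order, so tracing equality in your argument back to that clause suffices and no separate structural analysis of Edwards-extremal graphs is needed.
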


\noindent We also consider analogous questions for $k$-partitions in Section 4.

\medskip

Though Edward's bound is tight for all integers $m=\binom{n}{2}$, Erd\H{o}s \cite{Erd95} conjectured that
the difference between Edwards' bound and the truth can still be
arbitrarily large for other $m$.
This was confirmed by Alon \cite{alon}: every graph with $m=n^2/2$ edges
admits a bipartition $V_1,V_2$ such that $e(V_1,V_2)\ge
m/2+t(m)/2+\Omega(m^{1/4})$. Bollob\'{a}s and Scott \cite{1, Sc05} made a similar conjecture
for $\max\{e(V_1),e(V_2)\}$: for certain  $m$,   $\max
\{e(V_1),e(V_2)\}$ can be arbitrary far from $m/4+t(m)/4$.  Ma and Yu
\cite{MY16} proved that every graph with $m=n^2/2$ edges
admits a bipartition $V_1,V_2$ such that $\max\{e(V_1),e(V_2)\}\le  m/4+t(m)/4-\Omega(m^{1/4})$.
Another result in the same spirit was given by Hofmeister and Lefmann \cite{HL} that any graph with $\binom{kn}{2}$ edges
has a $k$-partition $V_1,...,V_k$ with $\sum_{i=1}^k e(V_i)\le k\binom{n}{2}$,
which beats the trivial upper bound $\frac{1}{k}\binom{nk}{2}$.

Motivated by these results, Scott asked the following question: does every graph $G$ with
$\binom{kn}{2}$ edges have a vertex partition into $k$ sets, each of
which contains at most $\binom{n}{2}$ edges? ( See Problem 3.9 in \cite{Sc05}.)
We show that the answer to this question is negative for $k=2$.
\begin{theo}\label{thm:2n}
There exist infinitely many positive integers $n$ and for each such $n$ there is a graph
$G$ with $\binom{2n}{2}$ edges, such that, for every bipartition
$V_1, V_2$ of $G$, $\max\{e(V_1),e(V_2)\}\ge \binom{n}{2}+5n/48.$
\end{theo}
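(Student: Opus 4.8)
The plan is to exhibit the graph explicitly. I would take $G$ to be the disjoint union $K_{2n-1}\sqcup K_q$ of a large odd clique and a small clique, where $q$ is chosen so that $\binom{q}{2}=2n-1$; then $e(G)=\binom{2n-1}{2}+(2n-1)=\binom{2n}{2}$, as required. The constraint $q(q-1)=4n-2$ forces $\binom{q}{2}$ to be odd, which happens exactly for $q\equiv 2,3\pmod 4$, and for each such $q$ the value $n=(\binom{q}{2}+1)/2$ is an integer; this supplies the infinitely many $n$ in the statement. The guiding intuition is that the odd clique $K_{2n-1}$ is cut-resistant: every bipartition splits its $2n-1$ vertices so that one side receives at least $n$ of them, hence at least $\binom{n}{2}$ of its edges, while the leftover $2n-1$ edges packed into $K_q$ cannot be hidden in the slack of the lighter side. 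It is essential that one clique have odd order and that the second be much smaller: a disjoint union of two equal cliques would let the adversary halve each piece and rebalance, giving no improvement over $\binom{n}{2}$.

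Next I would reduce an arbitrary bipartition to a single parameter. Fix a bipartition, let $K_{2n-1}$ contribute $a$ and $2n-1-a$ vertices to the two sides, and assume without loss of generality $a\ge n$. If $a\ge n+1$, then that side already carries $\binom{a}{2}\ge\binom{n+1}{2}=\binom{n}{2}+n$ edges of the big clique alone, exceeding the target since $n\ge \tfrac{5n}{48}$; so I may assume $a=n$. Writing $\beta$ for the number of $K_q$-vertices placed on the side holding the $n$-vertex part of the big clique, the two sides carry $\binom{n}{2}+\binom{\beta}{2}$ and $\binom{n-1}{2}+\binom{q-\beta}{2}$ edges respectively. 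The whole theorem thus reduces to the one-variable inequality $\min_{0\le\beta\le q}\max\!\big(\binom{n}{2}+\binom{\beta}{2},\ \binom{n-1}{2}+\binom{q-\beta}{2}\big)\ge\binom{n}{2}+\tfrac{5n}{48}$.

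To prove this I would observe that $g_1(\beta):=\binom{n}{2}+\binom{\beta}{2}$ is increasing and $g_2(\beta):=\binom{n-1}{2}+\binom{q-\beta}{2}$ is decreasing in $\beta$, so the minimum of their maximum is attained at the crossing point. Solving $g_1=g_2$, that is $\binom{q-\beta}{2}-\binom{\beta}{2}=n-1$, via the identity $\binom{q-\beta}{2}-\binom{\beta}{2}=\tfrac{(q-1)(q-2\beta)}{2}$ together with $q(q-1)=4n-2$, gives the crossing value $\beta_0=\tfrac{qn}{2(2n-1)}$, at which the common value is $\binom{n}{2}+\binom{\beta_0}{2}$. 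Since $q\sim 2\sqrt{n}$ one has $\beta_0\sim\tfrac12\sqrt{n}$ and $\binom{\beta_0}{2}\sim\tfrac{n}{8}\ge\tfrac{5n}{48}$. The main obstacle is making this rigorous with exact coefficients and integrality: $\beta$ must be an integer while $\beta_0$ need not be, so one must bound $\binom{\lfloor\beta_0\rfloor}{2}$ and $\binom{\lceil\beta_0\rceil}{2}$ from below, and the generous gap between the true optimum $\tfrac{n}{8}$ and the claimed $\tfrac{5n}{48}$ is exactly what absorbs these rounding losses. One must also check that the already-dispatched regime $a\ge n+1$ and the degenerate splits (one clique swallowed entirely by a single side) cannot beat the bound. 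Controlling these error terms, rather than the choice of construction, is where the work lies.
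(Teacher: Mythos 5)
Your proposal is correct in outline, and the key computations check out, but it takes a genuinely different route from the paper. The paper's construction is three disjoint copies of $K_t$ with $3\binom{t}{2}=\binom{2n}{2}$; producing infinitely many admissible pairs $(n,t)$ then requires a Pell-type recurrence ($n_{i+1}=4n_i-n_{i-1}-1$, $t_{i+1}=4t_i-t_{i-1}-1$, verified by an induction on two auxiliary quantities), and the lower bound follows from a convexity estimate on the three clique pieces landing in the larger side. Your construction $K_{2n-1}\sqcup K_q$ with $\binom{q}{2}=2n-1$ sidesteps the number theory entirely: the congruence $q\equiv 2,3\pmod 4$ already supplies infinitely many admissible $n$. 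Your balancing analysis is also sound, and slightly easier than you suggest: with $q(q-1)=4n-2$ the crossing point is $\beta_0=qn/(2(2n-1))=n/(q-1)>\sqrt{n}/2$, and since $g_1$ is nondecreasing and $g_2$ nonincreasing (on the relevant range), every integer $\beta$ satisfies $\max(g_1(\beta),g_2(\beta))\ge g_1(\beta_0)=\binom{n}{2}+\binom{\beta_0}{2}$ with $\binom{x}{2}:=x(x-1)/2$ extended to reals --- so integrality can only push the minimum up, not down, and there is no rounding loss to absorb. One then checks $\binom{\beta_0}{2}\ge n/8-\sqrt{n}/4\ge 5n/48$ once $n\ge 144$, and restricting to large $n$ is harmless since only infinitely many are needed. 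Net comparison: your approach is arithmetically much simpler and in fact yields the stronger excess $n/8-O(\sqrt{n})$ (versus roughly $5n/48$ from the paper's construction); what the paper's symmetric three-clique construction buys is a one-line convexity argument for the lower bound, at the price of the recurrence lemma.
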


\medskip

This paper is organized as follows. We prove Theorem~\ref{thm:HS} in Section 2,
and then investigate complete multipartite graphs for
the Bollob\'{a}s-Scott conjecture in Section 3.
In Section 4, we discuss the questions of Scott and complete the proofs of Theorems \ref{thm:norm} and \ref{thm:2n}.

\section{Hartke-Seacrest conjecture}
In this section, we prove Theorem \ref{thm:HS}.
We need two operations on a sequence of non-negative integers.
Let $\pi=(d_1,\dots,d_n)$ with $d_1\geq\cdots\geq d_n$.
By removing $d_i$ from $\pi$ and subtracting $1$ from the
$d_i$ remaining elements of $\pi$ with lowest indices, we obtain a new sequence
$\pi'=(d'_1,\dots, d'_{i-1},d'_{i+1},\dots,d'_n)$, and we say that $\pi'$ is obtained from $\pi$ by
{\it laying off}  $d_i$.  This operation was introduced by Kleitman and Wang \cite{6},
and they proved the following.

\begin{lem}[Kleitman-Wang \cite{6}]
For any $i\in [n]$, the sequence $\pi=(d_1,\dots,d_n)$ with $d_1\ge
\ldots \ge d_n$ is graphic if and only if
the sequence $\pi'$ obtained from $\pi$ by laying off $d_i$ is graphic.
\end{lem}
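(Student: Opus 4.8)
The plan is to prove the two implications separately, with the reverse implication being routine and the forward one requiring an exchange argument. Throughout, fix the realization question on the labeled vertex set $\{v_1,\dots,v_n\}$ with intended degrees $d_1\ge\cdots\ge d_n$, and let $S$ denote the set of $d_i$ vertices whose degrees are decremented when laying off $d_i$, i.e.\ the $d_i$ vertices of lowest index among $\{v_1,\dots,v_n\}\setminus\{v_i\}$.

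For the easy direction, suppose $\pi'$ is graphic and take a realization $G'$ on the vertex set $\{v_1,\dots,v_n\}\setminus\{v_i\}$ in which each $v_j$ has degree $d'_j$. I would simply reintroduce $v_i$ and join it to every vertex of $S$. Since $v_i$ is a new vertex and the $d_i$ added edges are all incident to it, the resulting graph $G$ is simple; moreover $d_G(v_i)=|S|=d_i$, every $v_j\in S$ has its degree restored from $d'_j$ to $d_j$, and every other vertex keeps its degree. Hence $G$ realizes $\pi$.

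For the forward direction, suppose $\pi$ is graphic. The goal is to produce a realization $G$ of $\pi$ in which $v_i$ is adjacent to \emph{exactly} the set $S$; deleting $v_i$ from such a $G$ then decrements precisely the degrees of the vertices of $S$ by one, so $G-v_i$ realizes $\pi'$. To find such a $G$, I would take, among all realizations of $\pi$ on $\{v_1,\dots,v_n\}$, one that minimizes the potential $\Phi(G)=\sum_{v_j\in N_G(v_i)}j$, the sum of the indices of the neighbors of $v_i$. Since $d_G(v_i)=d_i$ is fixed, $\Phi$ is a nonnegative integer bounded below by the sum of the $d_i$ smallest indices in $\{1,\dots,n\}\setminus\{i\}$, and that lower bound is attained exactly when $N_G(v_i)=S$.

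The heart of the matter is the standard two-switch. If $N_G(v_i)\ne S$ in the minimizing realization, there is a neighbor $v_k$ of $v_i$ and a non-neighbor $v_j$ of $v_i$ with $j<k$, whence $d_j\ge d_k$ by monotonicity. The main obstacle is locating a fourth vertex $w\notin\{v_i,v_j,v_k\}$ with $v_jw\in E(G)$ and $v_kw\notin E(G)$: comparing $N_G(v_j)\setminus\{v_k\}$ with $N_G(v_k)\setminus\{v_j\}$, the first has size $d_j-[v_j\sim v_k]\ge d_k-[v_j\sim v_k]$ equal to the size of the second, while $v_i$ lies in the second but not the first, forcing the existence of such a $w$. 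Replacing the edges $v_iv_k,\,v_jw$ by $v_iv_j,\,v_kw$ preserves all degrees, so the new graph still realizes $\pi$, yet it lowers $\Phi$ by $k-j>0$ since $v_k$ leaves and $v_j$ enters $N(v_i)$. This contradicts minimality (or, run as an iteration, must terminate), forcing $N_G(v_i)=S$ and completing the proof. I expect the only delicate point to be the case check ensuring $w$ is distinct from $v_i,v_j,v_k$ and handling whether or not $v_j$ and $v_k$ are themselves adjacent.
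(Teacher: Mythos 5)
Your proof is correct: the reverse direction by reattaching $v_i$ to the set $S$, and the forward direction by minimizing $\Phi$ and applying a two-switch (with the counting argument $|N_G(v_j)\setminus\{v_k\}|\ge |N_G(v_k)\setminus\{v_j\}|$ together with $v_i\in N_G(v_k)\setminus N_G(v_j)$ to produce the fourth vertex $w$) is the standard Kleitman--Wang exchange argument. The paper itself states this lemma as a cited result from \cite{6} and gives no proof, so there is nothing internal to compare against; your argument is a faithful, self-contained reconstruction of the original one.
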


It is easy to see that the sequence $\pi'$ obtained from $\pi$  by
laying off $d_i$ need not be non-increasing.
To avoid this issue, Hartke and Seacrest \cite{HS} introduced a
variation of the above operation.
Choose a fixed $i\in [n]$.
Let $s$ be the smallest value among the $d_i$
largest elements in $\pi$, not including the $i$th element of $\pi$
(namely, $d_i$ itself). Let $S=\{j\in
[n]-\{i\}:  d_j>s\}$. Note that $|S|<d_i$. Let $T$ be the
set of $d_i-|S|$ largest indices $j$ with $j\not=i$ and $d_j=s$.
Then by {\it laying off $d_i$ with order}, we remove $d_i$ from $\pi$
and subtract $1$ from $d_j$ for all $j\in S\cup T$. If
$\pi'=(d'_1,\dots d'_{i-1},d'_{i+1},\dots,d'_n)$ denotes the new
sequence, then it has the monotone property $d'_1\geq\cdots\geq d'_n$.

Clearly, the sequence obtained from $\pi$  by laying off $d_i$ with order is just a permutation of the
sequence obtained from $\pi$ by laying off $d_i$. So the following is true.

\begin{lem}[Hartke and Seacrest \cite{HS}]\label{lem:HS}
For any $i\in [n]$, the sequence $\pi=(d_1,\dots,d_n)$ with $d_1\ge
\ldots \ge d_n$ is graphic if and only if
the sequence obtained from $\pi$ by laying off $d_i$ with order is graphic.
\end{lem}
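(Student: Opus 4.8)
The plan is to deduce this lemma directly from the Kleitman--Wang Lemma, exploiting the observation (stated just before the lemma) that laying off $d_i$ \emph{with order} merely rearranges the output of the ordinary lay-off. The key fact I would lean on is that being graphic is a property of the underlying multiset of degrees: a sequence is graphic exactly when some (equivalently, every) ordering of it is the degree sequence of a simple graph, so any two sequences that are permutations of one another are simultaneously graphic or non-graphic. Granting the permutation claim, the desired equivalence then reads: laying off $d_i$ with order yields a graphic sequence $\iff$ the ordinary lay-off of $d_i$ yields a graphic sequence $\iff$ (by Kleitman--Wang) $\pi$ itself is graphic.

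The substance of the argument is therefore to confirm that the sequence produced by laying off $d_i$ with order is a permutation of the one produced by the ordinary lay-off. I would first note that both operations delete the entry $d_i$ and then subtract $1$ from exactly $d_i$ of the remaining $n-1$ entries, so it is enough to show that the two rules decrement the same \emph{multiset} of values. In the ordinary lay-off one decrements the $d_i$ remaining entries of lowest index, which --- since $\pi$ is non-increasing --- are precisely the $d_i$ largest remaining values. Using the notation of the definition, every entry strictly larger than $s$ already sits among these $d_i$ largest values (as $s$ is the smallest of them), so the ordinary lay-off decrements all $|S|$ entries exceeding $s$ together with the $d_i-|S|$ lowest-indexed entries equal to $s$. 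The lay-off with order likewise decrements all of $S$ and then the set $T$ of the $d_i-|S|$ highest-indexed entries equal to $s$. Thus the two rules agree on the entries exceeding $s$ and differ only in which tied entries (those of value $s$) they select; but both select exactly $d_i-|S|$ of them, and since all such entries share the common value $s$, either choice produces the same number of copies of $s-1$ and leaves the same number of copies of $s$ untouched. Hence the two outputs have identical multisets of values and are permutations of each other.

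The only delicate point --- and the step I would write out most carefully --- is this bookkeeping about tied entries: one must check that both rules decrement precisely $d_i-|S|$ entries of value $s$ and treat every other entry identically. Monotonicity of $\pi$ is what makes this clean, since it forces the entries exceeding $s$ to occupy exactly the top positions (so they are selected by both rules) and confines the entries equal to $s$ to a contiguous block (so the ``lowest-indexed'' and ``highest-indexed'' selections differ only within that block). With this verified, the permutation-invariance of graphicness completes the proof.
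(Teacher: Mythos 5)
Your proof is correct and takes essentially the same route as the paper, which derives the lemma from the Kleitman--Wang lemma via the single observation that laying off $d_i$ with order produces a permutation of the ordinary lay-off; you have merely written out the tie-breaking bookkeeping that the paper dismisses with the word ``clearly.''
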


We give a brief outline of our proof of Theorem~\ref{thm:HS}. We
choose two consecutive elements $d_\ell$ and $d_{\ell+1}$ of $\pi$.
Using Lemma \ref{lem:HS} we obtain a new graphic sequence $\pi''$ of
length $n-2$ by first laying off $d_{\ell+1}$ with order and then laying
off $d_{\ell}$ with order. By induction, $\pi''$ has an $(n-2)$-vertex
realization $F$  whose parity bisection $J$ has the desired
property. We then show that one can form $G$ from $F$ by adding two new vertices (for $d_\ell$ and $d_{\ell+1}$)
and choosing their neighbors, so that the parity bisection of $G$
satisfies Theorem~\ref{thm:HS}.

\begin{proof}[Proof of Theorem \ref{thm:HS}]
We apply induction on the length $n$ of the graphic
sequence $\pi=(d_1,\ldots, d_n)$ with $d_1\ge \ldots \ge d_n$.
The assertion is trivial when $n = 1,2$.
So we may assume that $n\ge 3$ and  the assertion holds for all graphic sequences with length less than $n$.
Then there exist two consecutive elements of $\pi$ that are identical;
so let $\ell \in [n-1]$ be fixed such that $$d_\ell = d_{\ell+1} = k.$$


Let $\pi'=(d_1',...,d_\ell',d_{\ell+2}',...,d_{n}')$ be the sequence obtained
from $\pi$ by laying off $d_{\ell+1}$ with order. Let
$\pi''=(d_1'',...,d_{\ell-1}'',d_{\ell+2}'',...,d_{n}'')$ be the sequence
obtained from $\pi'$ by laying off $d_l'$ with order.
By Lemma \ref{lem:HS}, $\pi'$ and $\pi''$ both are graphic sequences.

Let $\omega=(f_1, ...,f_{n-2})$ be the sequence obtained from
$\pi$ with $d_\ell$ and $d_{\ell+1}$ removed,
and re-indexed so that the indices are consecutive, i.e., $f_i=d_i$
for $i\in [\ell-1]$ and $f_i=d_{i+2}$ for $i\in [n-2]\setminus [\ell-1]$.
Let $\omega'=(f_1', ...,f_{n-2}')$ be the sequence obtained from $\pi'$
with $d'_\ell$ removed, and re-indexed so that the indices are consecutive. Also,
let $\omega''=(f_1'' , ...,f_{n-2}'')$ be the sequence obtained from
$\pi''$ by re-indexing so that the indices are consecutive.
Note that  $\omega''$ is a  graphic sequence.

To turn a realization of $\omega''$ to a realization of $\pi$,
we need to track the changes between $f_i$ and $f_i''$ for all $i\in
[n-2]$. Note that  $0\le f_i-f_i''\le 2$.
Let $$X_{1}=\{i\in [n-2]: f''_i=f_i-1\}, ~~~  X_{2}=\{i\in [n-2]: f''_i=f_i-2\}$$
and
\begin{equation*}
K=d_{\ell}'=\left\{\begin{array}{ll}
    k-1 & \text{ if } d'_{\ell} =d_{\ell}-1, \\
    k & \text{ if } d'_{\ell} =d_{\ell}.
  \end{array}\right.
\end{equation*}
So $K=\sum_{i\in [n-2]}|f_i-f_i'|=\sum_{i\in [n-2]}|f_i'-f_i''|$; hence
\begin{equation}\label{equ:2K}
|X_1|+2|X_2|=2K.
\end{equation}

We now prove two claims asserting certain properties on $X_1$ and
$X_2$. For convenience, we introduce some notation.
For nonempty sets $A$ and $B$ of integers, we write $A<B$ if the maximum integer in $A$ is less than the minimum integer in $B$.
A set  $S$ of integers is {\it consecutive} if it consists of consecutive
integers. A sequence of pairwise disjoint sets, $A_1,...,A_t$, of integers is said to be {\it
  consecutive} if $A_1\cup...\cup A_t$ is consecutive and, for any $i,j\in [t]$ with $i<j$ and $A_i$ and $A_j$
nonempty, we have $A_i<A_j$.

\bigskip

{\noindent \bf Claim 1.} There exist consecutive sets $R_1,R_2,R_1',R_2',Q$ such
that  $X_{1}=R_1'\cup R_2'$ and $X_2=R_1\cup R_2$
such that
\begin{itemize}
\item [(a)] the sequence $R_1, R_1', Q, R_2'$ is consecutive,
\item [(b)] either $R_2=\emptyset$ or $R_2=Q$, and
\item [(c)] $f_i''=f_j''+1$ for all $i\in R_1', j\in R_2'$.
\end{itemize}

\begin{proof}[Proof of Claim 1.]
Let $s$ be the minimum of the largest $K$ numbers in $\omega=(f_1,...,f_{n-2})$.
(Note that this $s$ is the same as the $s$ in the definition of laying off $d_{\ell+1}$ with order from $\pi$.)
 In order to keep
track whether  $f_i'=f_i$ or $f_i'=f_i-1$ and whether $f_i''=f_i'$ or
$f_i''=f_i'-1$, we divide $[n-2]$ into  six pairwise disjoint sets:
\begin{equation*}
\begin{array}{ll}
    A=\{i\in [n-2]: f_i \geqslant s+2\}, ~~~~~&  D=\{i\in [n-2]: f_i=s, f_i'=f_i-1\}, \\
    B=\{i\in [n-2]: f_i=s+1\},~~~~~&  E=\{i\in [n-2]: f_i=s-1\},\\
    C=\{i\in [n-2]: f_i=s, f_i'=f_i\}, ~~~~~& F=\{i\in [n-2]: f_i \leqslant s-2\}.
\end{array}
\end{equation*}

By the definitions of $\pi'$ and $\omega'$, we see that  $A,B,C,D,E,F$ is consecutive and
\begin{equation*}
\begin{array}{ll}
\forall \ i \in A, & ~f_i'=f_i-1 \geqslant s+1,\\
\forall \ i \in B, & ~f_i'=f_i-1=s,\\
\forall \ i \in C, & ~f_i'=f_i =s,\\
\forall \ i \in D, & ~f_i'=f_i-1=s-1,\\
\forall \ i \in E, & ~f_i'=f_i =s-1,\\
\forall \ i \in F, & ~f_i'=f_i \leqslant s-2.
\end{array}
\end{equation*}
Thus, it is easy to see that $A\cup B\cup D=\{i\in [n-2]:
f'_i=f_i-1\}$; so $|A|+|B|+|D|=K.$

Let $Y=\{i\in [n-2]: f''_i=f_i'-1\}$. Then it follows that $$A\subseteq Y \text{~~and ~~} |Y|=K=|A|+|B|+|D|.$$
To complete our proof of  Claim 1, we distinguish four cases based on
relations among the sizes of $B,C,D,E$.

First, suppose $|C|\geq |B|+|D|$. Let $C''$ consist of the last $|B|+|D|$ integers in $C$, and $C':=C\setminus  C''$.
Then we see that $Y=A\cup C''$. Let $R_1=A$, $R_2=\emptyset$, $R_1'=B$,
$R_2'=C''\cup D$ and $Q=C'$.
It is easy to check that  $X_{1}=R_1'\cup R_2'$ and $X_2=R_1\cup R_2$,
and (a) and (b) holds.  Note that $f_i''=s$ for $i\in R_1'$, and
$f_j''=s-1$ for $j\in R_2'$; so (c) holds.

Next, suppose $|D|\leq|C|<|B|+|D|$. Let $B''$ consist of the last $|B|+|D|-|C|$ integers in $B$, and $B'=B\setminus B''$.
Then $Y=A\cup B''\cup C$. Let $R_1=A$, $R_2=Q= B''$, $R_1'=B'$ and
$R_2'=C\cup D$.
It is easy to check that  $X_{1}=R_1'\cup R_2'$ and $X_2=R_1\cup R_2$,
and  that (a) and (b) holds.
Note that $f_i''=s$ for $i\in R_1'$, and $f_j''=s-1$ for $j\in R_2'$;
so (c) holds.

Now assume $|C|<|D|\leq |C|+|E|$.
Let $E''$ consist of the last $|D|-|C|$ integers in $E$, and $E'=E\setminus E''$.
Then $Y=A\cup B\cup C\cup E''$. Let $R_1=A\cup B$, $R_2= \emptyset$,
$R_1'=C\cup D$, $R_2'=E''$, and $Q=E'$.
It is easy to check that  $X_{1}=R_1'\cup R_2'$ and $X_2=R_1\cup R_2$,
and (a) and (b) holds.
Note that  $f_i''=s-1$ for $i\in R_1'$ and  $f_j''=s-2$ for $j\in
R_2'$; so (c) holds.

Finally we consider the case when  $|D|>|C|+|E|$.
Let $D''$ consist of the last $|D|-|C|-|E|$ integers in $D$, and $D'=D\setminus D''$.
Then  $Y=A\cup B\cup C\cup D''\cup E$.
Let $R_1=A\cup B$, $R_2=Q=D''$, $R_1'=C\cup D'$ and $R_2'=E$. It is easy to check that  $X_{1}=R_1'\cup R_2'$ and $X_2=R_1\cup R_2$,
and (a) and (b) holds.
Note that $f_i''=s-1$ for $i\in R_1'$ and  $f_j''=s-2$ for $j\in
R_2'$; so (c) holds.
\end{proof}

Let $I_1=\{i\in [n-2]: i\equiv 1 \mod 2\}$ and $I_2=\{i\in [n-2]: i\equiv 0 \mod 2\}$.

\bigskip

{\noindent \bf Claim 2.} $|X_1\cap I_1|-|X_1\cap I_2|\in \{-2,0,
2\}$. Moreover,  $|X_1\cap I_1|-|X_1\cap I_2|=0$ implies
$|X_2\cap I_1|-|X_2\cap I_2|\in \{-1,0,1\}.$

\begin{proof}[Proof of Claim 2.]
By \eqref{equ:2K}, we see $|X_1|$ must be even. So $|R'_1|$ and $|R'_2|$ are of the same parity.
Since both $R_1'$ and $R_2'$ are consecutive, $|X_1\cap I_1|-|X_1\cap I_2|\in \{-2,0, 2\}$.

Now suppose $|X_1\cap I_1|-|X_1\cap I_2|=0$.
If $R_2=\emptyset$, then $X_2=R_1$ is a consecutive set and thus
$|X_2\cap I_1|$ and $|X_2\cap I_2|$ differ by at most one. So we may
assume $R_2\ne \emptyset$. Then $R_2=Q$ by Claim 1. As the sequence $R_1,R_1',Q, R_2'$ is consecutive,
we see that $X_1\cup X_2$ is a consecutive set; so $|(X_1\cup X_2)\cap I_1|$ and $|(X_1\cup X_2)\cap I_2|$ differ by at most one.
Hence, since $|X_1\cap I_1|-|X_1\cap I_2|=0$,  $||X_2\cap I_1|-|X_2\cap I_2||\le 1$.
\end{proof}

We are ready to construct a realization of $\pi=(d_1,...,d_{n})$.
Recall that $\omega''=(f_1'',...,f_{n-2}'')$ is a graphic sequence.
By induction hypothesis, there exists a realization $F$ of $\omega''$
with $V(F)=\{w_1,...,w_{n-2}\}$ and $d_{F}(w_i)=f_i''$ for $i\in
[n-2]$, such that the parity bisection $J$ of $F$ satisfies
\begin{equation}\label{equ:G-rho(H)}
 d_{J}(w_i)\ge (d_F(w_i)-1)/2 \text{~~ for all }i\in [n-2].
\end{equation}
Let $W_j=\{w_i: i\equiv j \mod 2\}$ for $j\in [2]$.

In what follows, we will construct a graph $G$ as the realization of
$\pi$ such that its parity bisection $H$ of $G$ satisfies
$d_{H}(v)\ge (d_G(v)-1)/2$ for all $v\in V(G)$,
by adding two new vertices $a,b$ (so $V(G)=V(F)\cup \{a,b\}$) and some
edges from these two vertices to $F$ (which we will describe in three
separate cases).
Notice that if $K=k-1$, then we would add the edge $ab$ as well; so
for convenience, let
\begin{equation*}
\epsilon=\left\{\begin{array}{ll}
    1, & \text{ if } K=k-1, \\
    0, & \text{ if } K=k.
  \end{array}\right.
\end{equation*}
We write $V(G)=\{v_1,...,v_{n}\}$ such that
$v_i=w_i$ for $i<\ell$, $\{v_{\ell}, v_{\ell+1}\}=\{a,b\}$, and
$v_i=w_{i-2}$ for $\ell+1<i\le n$.

In view of Claim 2, we consider the following three cases. In each of
these three cases, we use $a$ to represent the vertex in $\{v_{\ell}, v_{\ell+1}\}$ with odd index.
So the parity partition of $V(G)$ is  $$V_1=W_1\cup \{a\} \text{~~ and~~} V_2=W_2\cup \{b\}.$$

\bigskip

Case $1$. $|X_1\cap I_1|-|X_1\cap I_2|=0$.

\medskip

We know $F\subseteq G$ and $V(G)=V(F)\cup \{a,b\}$, and we need to add edges at $a$ and
$b$ to form $G$, a realization of $\pi$. Add
$ab$ if $\epsilon=1$, $av_i$ for all $i\in X_2\cup (X_1\cap I_2)$, and $bv_j$ for all $j\in X_2\cup (X_1\cap I_1)$.
Since $|X_1\cap I_1|=|X_1\cap I_2|$, $G$ is a realization of $\pi$.
Let $H$ denote the parity bisection of $G$; so $V_1,V_2$ are the
partition sets of $H$. We need to show that  
$d_{H}(v)\ge (d_G(v)-1)/2$ for all $v\in V(G)$.

For each $w_i$ with $i\notin X_1\cup X_2$, its
neighborhoods in $F,G$ are the same; so  by \eqref{equ:G-rho(H)},
$d_{H}(w_i)=d_{J}(w_i)\ge (d_F(w_i)-1)/2=(d_G(w_i)-1)/2$.

For vertices $w_i$ with $i\in X_2$, we have
$d_{G}(w_i)=d_{F}(w_i)+2$ and $d_{H}(w_i)=d_{J}(w_i)+1$;
so by \eqref{equ:G-rho(H)}, 
$d_{H}(w_i)=d_J(w_i)+1\ge (d_F(w_i)-1)/2+1 =(d_G(w_i)-1)/2$.

For vertices $w_i$ with $i\in X_1$, we have
$d_{G}(w_i)=d_{F}(w_i)+1$ and $d_{H}(w_i)=d_{J}(w_i)+1$; so
by \eqref{equ:G-rho(H)}, 
$d_H(w_i)=d_J(w_i)+1\ge (d_F(w_i)-1)/2+1 >(d_G(w_i)-1)/2$.

For the vertex $a$, we have $d_{G}(a)=|X_2|+|X_1\cap I_2|+\epsilon$ and $d_{H}(a)=|X_2\cap I_2|+|X_1\cap I_2|+\epsilon$.
Note that in this case, by Claim 2, we have $||X_2\cap I_1|-|X_2\cap I_2||\le 1$,
which implies that 
$$2d_{H}(a)-d_{G}(a)=\left(|X_2\cap I_2|-|X_2\cap I_1|\right)+|X_1\cap I_2|+\epsilon\ge -1.$$
Hence, $d_H(a)\ge (d_G(a)-1)/2$.

Similarly, for the vertex $b$, we have $d_{G}(b)=|X_2|+|X_1\cap I_1|+\epsilon$ and $d_{H}(b)=|X_2\cap I_1|+|X_1\cap I_1|+\epsilon$.
Note, by Claim 2,  $||X_2\cap I_1|-|X_2\cap I_2||\le 1$; so
$$2d_{H}(b)-d_{G}(b)=\left(|X_2\cap I_1|-|X_2\cap I_2|\right)+|X_1\cap I_1|+\epsilon\ge -1.$$
Hence, $d_H(b)\ge (d_G(b)-1)/2$.

\bigskip

Case $2$. $|X_1\cap I_2|-|X_1\cap I_1|=2$.

\medskip

Recall that $X_1=R'_1\cup R'_2$, where each $R'_i$ is consecutive.
Thus it follows that $|R'_i\cap I_2|=|R'_i\cap I_1|+1$ for $i\in [2]$.
Since the sequence $R_1, R'_1, Q, R'_2$ is consecutive and starts from the integer $1$,
we see that $|R_1\cap I_2|= |R_1\cap I_1|-1$ and $|Q\cap I_2|= |Q\cap
I_1|-1$. Therefore, since $R_2=\emptyset$ or $R_2=Q$ (by (b) of Claim 1), we
have
\begin{equation}\label{equ:Case2}
-2\le |X_2\cap I_2|-|X_2\cap I_1|\le -1.
\end{equation}

We claim that there exists some $z\in X_1\cap I_2$ with
$d_J(w_z)\ge d_F(w_z)/2$.
To see this, choose  $x\in R'_1\cap I_2$ and $y\in R'_2\cap I_2$.
By \eqref{equ:G-rho(H)} ,
we have 
$d_J(w_x)\ge (d_F(w_x)-1)/2$ and $d_J(w_y)\ge (d_F(w_y)-1)/2$. By (c)
of Claim 1,  $d_F(w_x)=d_F(w_y)+1$.
Observe that for any vertex $u$ of $F$, $d_F(u)$ and 
$2d_J(u)-d_F(u)$ are of the same parity;
so $2d_J(w_x)-d_F(w_x)$ and $2d_J(w_y)-d_F(w_y)$ must have different parities.
Therefore there exists $z\in\{x,y\}$ such that 
$d_J(w_z)\ge d_F(w_z)/2$, proving the claim.

We now add edges at $a$ and
$b$ to form $G$ from $F$: add $ab$ if $\epsilon=1$, $av_i$ for all $i\in X_2\cup (X_1\cap I_2)\setminus\{z\}$,
and $bv_j$ for all $j\in X_2\cup (X_1\cap I_1)\cup \{z\}$.
Since $|X_1\cap I_2|=|X_1\cap I_1|+2$, $G$ is a realization of $\pi$. Next we show that the parity
bisection $H$ of $G$ satisfies the property that $d_H(v)\ge
(d_G(v)-1)/2$ for all $v\in V(G)$.

For each $w_i$ with $i\notin X_1\cup X_2$, its
neighborhoods in $F,G$ are the same; so by \eqref{equ:G-rho(H)},
$d_H(w_i)=d_J(w_i)\ge (d_F(w_i)-1)/2=(d_G(w_i)-1)/2$.

For each $w_i$ with $i\in X_2$, $d_{G}(w_i)=d_F(w_i)+2$ and $d_{H}(w_i)=d_J(w_i)+1$.
Hence by \eqref{equ:G-rho(H)} and the way we choose $z$, $d_H(w_i)=d_J(w_i)+1\ge (d_F(w_i)-1)/2+1=(d_G(w_i)-1)/2$.

For  $w_i$ with $i\in X_1\setminus \{z\}$,
we have $d_{G}(w_i)=d_F(w_i)+1$ and $d_{H}(w_i)=d_J(w_i)+1$;
so by \eqref{equ:G-rho(H)}, 
$d_H(w_i)=d_J(w_i)+1\ge (d_F(w_i)-1)/2+1>(d_G(w_i)-1)/2$.

The vertex $w_z$ satisfies
$d_{G}(w_z)=d_F(w_z)+1$ and $d_{H}(w_z)=d_J(w_z)$.
Hence by \eqref{equ:G-rho(H)}, $d_H(w_z)=d_J(w_z)\ge d_F(w_z)/2=(d_G(w_z)-1)/2$.

For the vertex $a$, by definition we have
$d_{G}(a)=|X_2|+|X_1\cap I_2|-1+\epsilon$ and $d_{H}(a)=|X_2\cap I_2|+|X_1\cap I_2|-1+\epsilon$.
By \eqref{equ:Case2} and the fact that $|X_1\cap I_2|\ge 2$,
$$2d_{H}(a)-d_G(a)=(|X_2\cap I_2|-|X_2\cap I_1|)+|X_1\cap I_2|-1+\epsilon\ge -1.$$
Hence, $d_H(a)\ge (d_G(a)-1)/2$.

For the vertex $b$, we have
$d_{G}(b)=|X_2|+|X_1\cap I_1|+1+\epsilon$ and $d_{H}(b)=|X_2\cap I_1|+|X_1\cap I_1|+\epsilon$.
This, together with \eqref{equ:Case2}, imply that
$$2d_{H}(b)-d_G(b)=(|X_2\cap I_1|-|X_2\cap I_2|)+|X_1\cap I_1|-1+\epsilon\ge 0.$$
Hence,  $d_H(b)\ge (d_G(b)-1)/2$.

\bigskip

Case $3$. $|X_1\cap I_1|-|X_1\cap I_2|=2$.

\medskip

In this case, we have $|R'_i\cap I_1|=|R'_i\cap I_2|+1$ for $i\in [2]$
(as $R_1'$ and $R_2'$ are consecutive).
Because $R_1, R_1', Q, R_2'$ is consecutive, it follows that
$|R_1\cap I_1|=|R_1\cap I_2|$ and $|Q\cap I_1|=|Q\cap I_2|-1$. Since
$R_2=\emptyset$ or $R_2=Q$ (by (b) of Claim 1),
\begin{equation}\label{equ:Case3}
0\le |X_2\cap I_2|-|X_2\cap I_1|\le 1.
\end{equation}

Since $|X_1|$ is even and $|R'_i\cap I_1|=|R'_i\cap I_2|+1$ for $i\in [2]$, there exist $x\in R_1'\cap I_1$ and $y\in
R_2'\cap I_1$. By \eqref{equ:G-rho(H)} and (c) of Claim 1,
we have 
$d_J(w_x)\ge (d_F(w_x)-1)/2$, $d_J(w_y)\ge (d_F(w_y)-1)/2$,
and $d_F(w_x)=d_F(w_y)+1$.
Since for any vertex $u$ of $F$, $d_F(u)$ and 
$2d_J(u)-d_F(u)$ are of the same parity,
$2d_J(w_x)-d_F(w_x)$ and $2d_J(w_y)-d_F(w_y)$ must have different parities.
Therefore there exists $z\in\{x,y\}$ such that 
$d_J(w_z)\ge d_F(w_z)/2$.

We now add edges at $a$ and $b$ to form the graph $G$: add
$ab$ if $\epsilon=1$, $av_i$ for all $i\in X_2\cup (X_1\cap I_2)\cup\{z\}$,
and $bv_j$ for all $j\in X_2\cup (X_1\cap I_1)\setminus\{z\}$.
Since $|X_1\cap I_1|=|X_1\cap I_2|+2$, $G$ is a realization of $\pi$.
We need to verify that 
$d_H(v)\ge (d_G(v)-1)/2$ for all $v\in V(G)$.

If $v=w_i$ for some $i\notin X_1\cup X_2$, then $d_{G}(w_i)=d_F(w_i)$
and $d_{H}(w_i)=d_J(w_i)$; so 
by \eqref{equ:G-rho(H)}, $d_H(w_i)=d_J(w_i)\ge
(d_F(w_i)-1)/2=(d_G(w_i)-1)/2$.

If  $v=w_i$ for some $i\in X_2$, then $d_{G}(w_i)=d_F(w_i)+2$ and
$d_{H}(w_i)=d_J(w_i)+1$; again  
by \eqref{equ:G-rho(H)}, $d_H(w_i)=d_J(w_i)+1\ge (d_F(w_i)-1)/2+1=(d_G(w_i)-1)/2$.

If $v=w_i$ for some $i\in X_1\setminus \{z\}$,
then $d_{G}(w_i)=d_F(w_i)+1$ and $d_{H}(w_i)=d_J(w_i)+1$; so
by \eqref{equ:G-rho(H)}, $d_H(w_i)=d_J(w_i)+1\ge (d_F(w_i)-1)/2+1>(d_G(w_i)-1)/2$.

Now suppose  $v=w_z$. Note that $d_{G}(w_z)=d_F(w_z)+1$ and $d_{H}(w_z)=d_J(w_z)$.
So 
$d_H(w_z)=d_J(w_z)\ge d_F(w_z)/2=(d_G(w_z)-1)/2$.

Suppose $v=a$. Note that
$d_{G}(a)=|X_2|+|X_1\cap I_2|+1+\epsilon$ and $d_{H}(a)=|X_2\cap I_2|+|X_1\cap I_2|+\epsilon$.
By \eqref{equ:Case3}, 
$$2d_{H}(a)-d_G(a)=(|X_2\cap I_2|-|X_2\cap I_1|)+|X_1\cap I_2|-1+\epsilon\ge -1.$$
So $d_H(a)\ge (d_G(a)-1)/2$.

Finally, suppose $v=b$. We have
$d_{G}(b)=|X_2|+|X_1\cap I_1|-1+\epsilon$ and $d_{H}(b)=|X_2\cap I_1|+|X_1\cap I_1|-1+\epsilon$.
By \eqref{equ:Case3} and the fact that $|X_1\cap I_1|\ge 2$,
$$2d_{H}(b)-d_G(b)=(|X_2\cap I_1|-|X_2\cap I_2|)+|X_1\cap I_1|-1+\epsilon\ge 0.$$
So $d_H(b)\ge (d_G(b)-1)/2$.
\end{proof}

\section{Complete multipartite graphs}
For convenience, we say that a bisection $H$ of a graph $G$ is {\it good} if for each $v\in V(G)$, $2d_H(v)\ge d_G(v)-1$.
Thus the Bollob\'{a}s-Scott conjecture says that every graph contains a good bisection.
Here we discuss which complete multipartite graphs have good bisections.
Throughout the rest of this section, let $G:=K_{r_1,\dots,r_k}$, and let $X_1, \ldots, X_k$ denote the
partition sets of $G$ with $|X_i|=r_i$ for all $i\in [k]$.

First, we note that whenever $|V(G)|$ is even, $G$ has a good bisection.
Since $|V(G)|$ is even, $V(G)$ has a partition $V_1,V_2$ such that $|V_1|=|V_2|$, $||X_i\cap V_1|-|X_i\cap
V_2||=1$ if $|X_i|$ is odd, and $|X_i\cap V_1|=|X_i\cap V_2|$ if $|X_i|$
is even. Let $H$ denote the maximum bisection of $G$ with partition
sets $V_1$ and $V_2$.  For any $v\in V(G)$, $v\in X_i$ for some $i\in
[k]$. Note that $d_G(v)=|V(G)|-|X_i|$ and $d_H(v)\ge
(|V(G)|-|X_i|-1)/2= (d_G(v)-1)/2$.

We will see that this is not always the case when $|V(G)|$ is odd.
The main result of this section is a necessary and sufficient condition for a complete multipartite graph with odd order
to contain a good bisection. As a consequence, we show that for many complete multipartite graphs $G$,
$G$ (and even $G$ minus an edge) does not have a good bisection.
(However, it is not hard to see that such  $G$ does have a bisection
$H$ such that for each $v\in V(G)$, $d_H(v)\ge \lfloor (d_G(v)-1)/2 \rfloor$.)

For a bisection $H$ of $G$ with partition sets $V_1$ and $V_2$,
we say that $X_i$ {\it crosses} $H$ if $X_i\cap V_j\not=\emptyset$ for
$j\in [2]$.
 For a subset $W\subseteq V(G)$, let $\overline{W}=V(G)\setminus W$.
We need two easy lemmas.

\begin{lem}\label{l1}
Let  $G=K_{r_1,\dots,r_k}$ and let $X_i$,  $i\in [k]$, be the
partition sets of $G$. Suppose $H$ is a good bisection of $G$ with
partition sets $V_1$ and $V_2$. Then the following statements hold for  $i\in [k]$.

\begin{itemize}
\item [$(i)$]  If $X_i$ crosses $H$ and $|\overline{X_i}|$ is even then $|\overline{X_i}\cap V_1|=|\overline{X_i}\cap V_2|$ and
 $||X_i\cap V_1|-|X_i\cap V_2||\leq 1$.
\item [$(ii)$]  If $X_i$ crosses $H$ and $|\overline{X_i}|$ is odd then $||\overline{X_i}\cap V_1|-|\overline{X_i}\cap V_2||=1$ and
 $||X_i\cap V_1|-|X_i\cap V_2||\leq 2$.
\end{itemize}
\end{lem}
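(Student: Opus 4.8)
The plan is to reduce both parts of the lemma to a single degree count, so I would begin by recording the one observation that couples the hypothesis ``good'' to the partition. For any $v\in X_i$, the neighbors of $v$ in $G$ are precisely the vertices of $\overline{X_i}$, so $d_G(v)=|\overline{X_i}|$; moreover, if $v\in V_j$ (where $\{j,3-j\}=\{1,2\}$), then every edge of $H$ at $v$ goes to the opposite part $V_{3-j}$ and necessarily avoids $X_i$, whence $d_H(v)\le |\overline{X_i}\cap V_{3-j}|$. This is the key inequality, and it uses both that $H$ is bipartite with parts $V_1,V_2$ (so edges at $v$ land in $V_{3-j}$) and that $H\subseteq G$ (so those neighbors lie in $\overline{X_i}$).

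Next I would exploit the crossing hypothesis. Since $X_i$ crosses $H$, I can pick $u\in X_i\cap V_1$ and $w\in X_i\cap V_2$. Applying the key inequality to $u$ and to $w$ and using that $H$ is good gives $|\overline{X_i}\cap V_2|\ge d_H(u)\ge (|\overline{X_i}|-1)/2$ and $|\overline{X_i}\cap V_1|\ge d_H(w)\ge (|\overline{X_i}|-1)/2$. Writing $N:=|\overline{X_i}|$, $a:=|\overline{X_i}\cap V_1|$ and $b:=|\overline{X_i}\cap V_2|$, I then have the single constraint $a+b=N$ together with $a,b\ge (N-1)/2$. If $N$ is even this forces $a=b=N/2$, i.e. $|\overline{X_i}\cap V_1|=|\overline{X_i}\cap V_2|$; if $N$ is odd it forces $\{a,b\}=\{(N-1)/2,(N+1)/2\}$, i.e. $\bigl||\overline{X_i}\cap V_1|-|\overline{X_i}\cap V_2|\bigr|=1$. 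This settles the first assertion in each of (i) and (ii).

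Finally I would transfer the information from $\overline{X_i}$ to $X_i$ using only that $H$ is a bisection, namely $\bigl||V_1|-|V_2|\bigr|\le 1$. Since $|V_j|=|X_i\cap V_j|+|\overline{X_i}\cap V_j|$ for each $j$, subtracting gives $|X_i\cap V_1|-|X_i\cap V_2|=(|V_1|-|V_2|)-(|\overline{X_i}\cap V_1|-|\overline{X_i}\cap V_2|)$. In case (i) the last difference is $0$, so $\bigl||X_i\cap V_1|-|X_i\cap V_2|\bigr|=\bigl||V_1|-|V_2|\bigr|\le 1$; in case (ii) the last difference is $\pm1$, so the right-hand side lies between $-2$ and $2$, giving $\bigl||X_i\cap V_1|-|X_i\cap V_2|\bigr|\le 2$.

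I do not expect a genuine obstacle here, as the lemma is essentially a bookkeeping step; the only point requiring care is the key inequality $d_H(v)\le|\overline{X_i}\cap V_{3-j}|$, since everything afterwards is elementary arithmetic driven by the parity of $|\overline{X_i}|$ and the balance of the bisection. It is worth emphasizing in the write-up that this inequality quietly uses the complete-multipartite structure (no edges inside $X_i$), so the same argument would not apply verbatim to arbitrary graphs.
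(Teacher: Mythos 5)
Your proposal is correct and follows essentially the same argument as the paper: pick one vertex of $X_i$ in each part, use goodness to bound $|\overline{X_i}\cap V_1|$ and $|\overline{X_i}\cap V_2|$ from below, combine with $|\overline{X_i}\cap V_1|+|\overline{X_i}\cap V_2|=|\overline{X_i}|$ and parity, then transfer to $X_i$ via $\bigl||V_1|-|V_2|\bigr|\le 1$. Your only (welcome) refinement is stating $d_H(v)\le|\overline{X_i}\cap V_{3-j}|$ as an inequality rather than the paper's equality, which is the more careful formulation since a bisection need not contain all cross edges.
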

\begin{proof}
Suppose $X_i$ crosses $H$. Then there exist $v_j\in X_i\cap V_j$ for
$j\in [2]$. Note that  $d_G(v_1)=d_G(v_2)=|\overline{X_i}|$.
Thus, since $H$ is a good bisection, $d_H(v_j)=|\overline{X_i}\cap
V_{3-j}|\geq\lfloor |\overline{X_i}|/2\rfloor$ for $j\in [2]$.
Notice that $|\overline{X_i}\cap V_1|+|\overline{X_i}\cap V_2|=|\overline{X_i}|.$
So $||\overline{X_i}\cap V_1|-|\overline{X_i}\cap V_2||\leq
1$.

If $|\overline{X_i}|$ is even then $|\overline{X_i}\cap
V_1|=|\overline{X_i}\cap V_2|$, and $(i)$ holds. So assume
$|\overline{X_i}|$ is odd. Since  $||V_1|-|V_2||\leq 1$, $||X_i\cap V_1|-|X_i\cap
V_2||\leq 2$, and $(ii)$ holds.
\end{proof}

\begin{lem}\label{l2}
Let $G=K_{r_1,\dots,r_k}$ with $|V(G)|$ odd, let $X_1,\dots,X_k$ be the partition sets of $G$,
and let $H$ be a good bisection of $G$ with partition sets $V_1,V_2$ such that $|V_1|=|V_2|+1$. Let
$${\cal X}_0'=\{X_i  : i\in  [k], X_i  \mbox { crosses $H$, }  |X_i|\equiv 0
\mod   2,  \mbox{ and } ||X_i\cap V_1|-|X_i\cap V_2||=2\}$$
and
$${\cal X}_1=\{X_i: i\in  [k], \mbox{ $X_i$ crosses
  $H$, and } |X_i|\equiv 1 \mod 2 \}.$$
Let $W_1=\bigcup_{X_i\in {\cal
    X}_1}X_i$,  $W_0=\bigcup_{X_i\in {\cal X}_0'}X_i$, $|{\cal
  X}_1|=t$, and $|{\cal X}_0'|=t'$.
Then
\begin{itemize}
\item [$(i)$] $|W_1\cap V_1|-|W_1\cap V_2|=t$,
\item [$(ii)$]  $|W_0\cap V_1|-|W_0\cap V_2|=2t'$, and
\item [$(iii)$] $|\overline{W_1\cup W_0}\cap V_1|-|\overline{W_1\cup W_0}\cap V_2|=-(t+2t'-1)$.
\end{itemize}
\end{lem}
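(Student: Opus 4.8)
The plan is to reduce everything to a single signed identity relating each $X_i$ to its complement and then exploit the parity mismatch forced by $|V(G)|$ being odd. The key observation is that for any $i\in[k]$, if we write $a_i=|X_i\cap V_1|-|X_i\cap V_2|$ and $b_i=|\overline{X_i}\cap V_1|-|\overline{X_i}\cap V_2|$, then subtracting $|X_i\cap V_2|+|\overline{X_i}\cap V_2|=|V_2|$ from $|X_i\cap V_1|+|\overline{X_i}\cap V_1|=|V_1|$ gives $a_i+b_i=|V_1|-|V_2|=1$. Hence, once Lemma~\ref{l1} pins down $b_i$, the value of $a_i$ (including its sign) is determined.

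First I would prove (i). Since $|V(G)|$ is odd, each $X_i\in{\cal X}_1$ has $|X_i|$ odd and therefore $|\overline{X_i}|$ even; as $X_i$ crosses $H$, Lemma~\ref{l1}(i) gives $b_i=0$, so $a_i=1$. Because the sets in ${\cal X}_1$ are pairwise disjoint, summing $a_i$ over the $t$ sets of ${\cal X}_1$ yields $|W_1\cap V_1|-|W_1\cap V_2|=t$.

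For (ii) the identity $a_i+b_i=1$ again does the work, but now an extra sign elimination is needed. For $X_i\in{\cal X}_0'$ we have $|X_i|$ even, so $|\overline{X_i}|$ is odd, and Lemma~\ref{l1}(ii) gives $|b_i|=1$; moreover $|a_i|=2$ directly from the definition of ${\cal X}_0'$. Combining with $a_i=1-b_i$ rules out $a_i=-2$ (which would force the impossible $b_i=3$), so $a_i=2$. Summing over the $t'$ disjoint sets of ${\cal X}_0'$ gives $|W_0\cap V_1|-|W_0\cap V_2|=2t'$.

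Finally, (iii) is pure bookkeeping. Since the $X_i$ are disjoint, $W_1$, $W_0$, and $\overline{W_1\cup W_0}$ partition $V(G)$, so the three signed differences add to $|V_1|-|V_2|=1$; substituting the values from (i) and (ii) gives $|\overline{W_1\cup W_0}\cap V_1|-|\overline{W_1\cup W_0}\cap V_2|=1-t-2t'=-(t+2t'-1)$. I expect the only delicate point to be the sign determination in (ii); everything else follows immediately from Lemma~\ref{l1} and the parity of $|V(G)|$.
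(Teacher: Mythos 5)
Your proposal is correct and follows essentially the same route as the paper: both use Lemma~\ref{l1} together with the relation $\bigl(|X_i\cap V_1|-|X_i\cap V_2|\bigr)+\bigl(|\overline{X_i}\cap V_1|-|\overline{X_i}\cap V_2|\bigr)=|V_1|-|V_2|=1$ to pin down the signed difference for each part (the paper phrases this as ``because $|V_1|=|V_2|+1$''), and your sign elimination in (ii) is exactly the paper's argument made explicit. Part (iii) is the same bookkeeping in both.
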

\begin{proof}
First, we prove $(i)$. If $W_1=\emptyset$, then $|W_1\cap V_1|-|W_1\cap V_2|=0=t$.
So assume $W_1\ne \emptyset$ and let $X_i\in {\cal X}_1$. Then $|X_i|$
is odd and $X_i$ crosses $H$. Hence, since $|V(G)|$ is odd, $|\overline{X_i}|$ is even.
By Lemma \ref{l1}, $|\overline{X_i}\cap V_1|=|\overline{X_i}\cap V_2|$.
Because $|V_1|=|V_2|+1$, $|X_i\cap V_1|-|X_i\cap V_2|=1$.
Hence,  $|W_1\cap V_1|-|W_1\cap V_2|=\sum_{X_i\in {\cal X}_1}(|X_i\cap V_1|-|X_i\cap V_2|)=t$.

We now prove $(ii)$. If $W_0=\emptyset$ then $t'=0$ and the result
holds trivially. So assume $W_0\ne \emptyset$ and let  $X_i\in {\cal X}_0'$.
Then $|X_i|$ is even and $X_i$ crosses $H$. Since $|V(G)|$ is odd, $|\overline{X_i}|$ is odd.
By Lemma \ref{l1}, $||\overline{X_i}\cap V_1|-|\overline{X_i}\cap V_2||=1$,
and by the definition of ${\cal X}_0'$, $||X_i\cap V_1|-|X_i\cap V_2||=2$.
Therefore, because $|V_1|=|V_2|+1$, $|X_i\cap V_1|-|X_i\cap V_2|=2$
(as well as $|\overline{X_i}\cap V_1|-|\overline{X_i}\cap V_2|=-1$).
Hence $|W_0\cap V_1|-|W_0\cap V_2|=\sum_{X_i\in {\cal X}'_0}(|X_i\cap V_1|-|X_i\cap V_2|)=2t'$.

It is easy to see that $(iii)$ follows from $(i)$, $(ii)$ and the assumption $|V_1|=|V_2|+1$.
\end{proof}

We now give a necessary and sufficient condition for a complete
multipartite graph with odd order to admit a good bisection.
Let $G=K_{r_1,\dots, r_k}$ with partition sets $X_i$, $i\in [k]$, such that
$|X_i|=r_i$. Let ${\cal X}=\{X_i: i\in [k]\}$, ${\cal S}_1=\{X_i:
i\in  [k] \mbox{ and }
|X_i| \equiv  1 \mod  2\}$ and ${\cal S}_0=\{X_i: i\in
[k] \mbox{ and } |X_i|\equiv 0 \mod  2\}$.
For any ${\cal A}\subseteq {\cal X}$, let $s({\cal A})=\sum_{X_i\in {\cal A}}|X_i|$. We say that ${\cal A}$ is {\it good} if
there exists  ${\cal A}'\subseteq {\cal A}$ such that $$s({\cal A}')=s({\cal A})/2+(m+2n-1)/2,$$
 where $m=|{\cal S}_1\setminus {\cal A}|$ and $n$ is a nonnegative integer with
 $n\leq |{\cal S}_0\setminus {\cal A}|$.

\begin{prop}\label{t2}
Let $G=K_{r_1,\dots,r_k}$ with partition sets $X_1, \ldots, X_k$, and assume $|V(G)|$ is odd.
Let ${\cal X}=\{X_i: i\in [k]\}$. Then $G$ has a good bisection if and only if ${\cal X}$ has a good subset.
\end{prop}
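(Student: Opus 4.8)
The plan is to set up a precise dictionary between good bisections of $G$ and good subsets of ${\cal X}$, exploiting the fact that, since $|V(G)|$ is odd, any bisection has parts $V_1,V_2$ with (after relabeling) $|V_1|=|V_2|+1$. The role of a good subset ${\cal A}$ will be to record the collection of partition classes $X_i$ that do \emph{not} cross $H$, while the auxiliary data $({\cal A}',n)$ will record, respectively, which of these non-crossing classes sit in the smaller part $V_2$ and how many even classes cross $H$ in the maximally unbalanced way. The main point is that Lemma \ref{l1} forces every crossing class to be split almost evenly: an odd class that crosses must satisfy $|X_i\cap V_1|-|X_i\cap V_2|=1$, and an even class that crosses must satisfy $|X_i\cap V_1|-|X_i\cap V_2|\in\{0,2\}$ (the value $-2$ being excluded because $|V_1|>|V_2|$). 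These are exactly the splits that keep each vertex good, so the only genuine constraint is a counting identity on sizes, which is what the definition of a good subset encodes.

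For the forward direction, suppose $H$ is a good bisection with $|V_1|=|V_2|+1$. I would let ${\cal A}$ be the set of classes that do not cross $H$, let ${\cal A}'\subseteq{\cal A}$ be those non-crossing classes contained in $V_2$, and let $n$ be the number of even crossing classes with $|X_i\cap V_1|-|X_i\cap V_2|=2$ (so $n=t'$ and $m:=|{\cal S}_1\setminus{\cal A}|=t$ in the notation of Lemma \ref{l2}). Since the classes comprising $\overline{W_1\cup W_0}$ are precisely the non-crossing classes together with the evenly split even classes, summing signed contributions gives $|\overline{W_1\cup W_0}\cap V_1|-|\overline{W_1\cup W_0}\cap V_2|=s({\cal A})-2s({\cal A}')$. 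Comparing with Lemma \ref{l2}$(iii)$, which evaluates this difference as $-(m+2n-1)$, yields $s({\cal A}')=s({\cal A})/2+(m+2n-1)/2$. As $0\le n\le|{\cal S}_0\setminus{\cal A}|$ holds automatically, ${\cal A}$ is a good subset.

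For the reverse direction, suppose ${\cal A}$ is a good subset with witnesses ${\cal A}'$ and $n$. I would build $V_1,V_2$ by the profile dictated above: place every class of ${\cal A}'$ entirely in $V_2$ and every class of ${\cal A}\setminus{\cal A}'$ entirely in $V_1$; split each odd class of ${\cal S}_1\setminus{\cal A}$ as $((|X_i|+1)/2,(|X_i|-1)/2)$; and choose any $n$ even classes of ${\cal S}_0\setminus{\cal A}$, split them as $(|X_i|/2+1,|X_i|/2-1)$, and split the remaining even classes of ${\cal S}_0\setminus{\cal A}$ evenly. A direct signed count, using the defining identity $2s({\cal A}')=s({\cal A})+m+2n-1$, gives $|V_1|-|V_2|=s({\cal A})-2s({\cal A}')+m+2n=1$, so $|V_1|=(|V(G)|+1)/2$ and the resulting $H$ is a bisection. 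Goodness is then checked class by class: for each $v\in X_i$ one has $d_H(v)=|\overline{X_i}\cap V_j|$ for the part $V_j$ not containing $v$, and in every one of the five cases a one-line computation shows $2d_H(v)\ge|\overline{X_i}|-1=d_G(v)-1$, with equality occurring exactly for the evenly split even classes (on their $V_1$ side) and the difference-$2$ even classes (on their $V_2$ side).

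I expect the main obstacle to be the forward direction's bookkeeping: correctly isolating the contributions of the four kinds of classes (non-crossing in $V_1$, non-crossing in $V_2$, crossing odd, crossing even) to the quantity $|\overline{W_1\cup W_0}\cap V_1|-|\overline{W_1\cup W_0}\cap V_2|$, and invoking Lemma \ref{l2} with the correct identification $(m,n)=(t,t')$. The reverse direction, by contrast, is essentially mechanical once the profile is written down: the balance identity is forced by the definition of a good subset, and goodness of $H$ is automatic because the prescribed splits are precisely those permitted by Lemma \ref{l1}. A minor point to handle carefully is that a singleton odd class assigned to ${\cal S}_1\setminus{\cal A}$ is placed wholly in $V_1$ (its $V_2$-share being empty); this causes no trouble, since the degree inequality is then vacuous on the $V_2$ side.
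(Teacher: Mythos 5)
Your proposal is correct and follows essentially the same route as the paper: in the forward direction you take ${\cal A}$ to be the non-crossing classes, use Lemma \ref{l1} to pin down the crossing splits and Lemma \ref{l2}$(iii)$ to extract the identity $s({\cal A}')=s({\cal A})/2+(m+2n-1)/2$ with $(m,n)=(t,t')$; in the reverse direction you build the bisection from exactly the same split profile the paper uses and verify goodness class by class via $|\overline{X_i}\cap V_1|-|\overline{X_i}\cap V_2|=1-(|X_i\cap V_1|-|X_i\cap V_2|)$. The bookkeeping you flag as the main obstacle is handled correctly, so no gaps.
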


\begin{proof}
First, we prove that if $G$ has a good bisection, then ${\cal X}$ has a good subset.
Let $H$ be a good bisection of $G$ and let $V_1,V_2$ be the corresponding partition sets of $H$.
Since $|V(G)|$ is odd, we may assume that $|V_1|=|V_2|+1$. Let ${\cal
  S}=\{X_i :  i\in  [k] \mbox{ and } X_i \mbox{ crosses } H\}$, ${\cal
  X}_1=\{X_i: i\in [k], X_i  \mbox { crosses $H$,  and }  |X_i|\equiv 1 \mod
2\}$,  ${\cal X}'_0=\{X_i: i\in [k], X_i  \mbox { crosses $H$,  }
|X_i|\equiv 0 \mod
2, \mbox{ and } ||X_i\cap V_1|-|X_i\cap V_2||=2\}$, and ${\cal
  X}_0''=\{X_i: i\in [k], X_i  \mbox { crosses $H$, }  |X_i|\equiv 0 \mod
2, \mbox{ and } |X_i\cap V_1|=|X_i\cap V_2|\}$.

By Lemma \ref{l1} $(ii)$ and the assumption $|V_1|=|V_2|+1$, for every
$X_i\in {\cal X}$ crossing $H$ and $|X_i|$ even,
$||X_i\cap V_1|-|X_i\cap V_2||=2$ or $|X_i\cap V_1|=|X_i\cap
V_2|$. Hence, ${\cal S}={\cal X}_1\cup {\cal X}_0'\cup {\cal X}_0''$.
Let ${\cal A}={\cal X}\setminus {\cal S}$.

Moreover, let $W_1=\bigcup_{X_i\in {\cal X}_1}X_i$, $W_0=\bigcup_{X_i\in {\cal X}'_0}X_i$,
$W_0'=\bigcup_{X_i\in {\cal X}''_0}X_i$ and $W_2=\bigcup_{X_i\in  {\cal A}}X_i$.
Then $V(H)=W_1\cup W_0\cup W_0'\cup W_2$. By the definition of ${\cal X}''_0$, $|W_0'\cap V_1|=|W_0'\cap V_2|$.
Let $|{\cal X}_1|=t$ and  $|{\cal X}'_0|=t'$; then
by Lemma \ref{l2} $(iii)$, $|(W_0'\cup W_2)\cap V_2|-|(W_0'\cup W_2)\cap V_1|=t+2t'-1$.
Combining these two equalities, we get
\begin{equation}\label{e1}
 |W_2\cap V_2|-|W_2\cap V_1|=|(W_0'\cup W_2)\cap V_2|-|(W_0'\cup W_2)\cap V_1|=t+2t'-1.
\end{equation}

Since $X_i$ does not cross $H$ for any $X_i\subseteq {\cal A}$, there exists ${\cal A}'\subseteq {\cal A}$ such
that $W_2\cap V_2=\bigcup_{X_i\in {\cal A'}}X_i$ and $W_2\cap V_1=\bigcup_{X_i\in {\cal A}\setminus {\cal A}'}X_i$.
Now,  $|W_2\cap V_2|=s({\cal A}')$ and $|W_2\cap V_1|=s({\cal A})-s({\cal A}')$.
Thus $s({\cal A}')=s({\cal A})/2+(t+2t'-1)/2$ by (\ref{e1}).
Note that  $t=|{\cal X}_1|=|{\cal S}_1\setminus {\cal A}|$ and
$t'=|{\cal X}'_0|\leq|{\cal X}_0'\cup {\cal X}_0''|
=|{\cal S}_0\setminus {\cal A}|$.
So ${\cal A}$ is a good subset of ${\cal X}$.

\medskip

Now, we prove that if ${\cal X}$ has a good subset, then $G$ has a good bisection.
Let ${\cal A}$ be a good subset of ${\cal X}$.
Then there exists ${\cal A}'\subseteq {\cal A}$ such that $s({\cal A}')=s({\cal A})/2+(m+2n-1)/2$,
where $m=|{\cal S}_1\setminus {\cal A}|$ and $n\leq |{\cal
  S}_0\setminus {\cal A}|$.
Let ${\cal S}_0'\subseteq {\cal S}_0\setminus {\cal A}$ with $|{\cal
  S}_0'|=n$, and let ${\cal S}_0''=({\cal S}_0\setminus {\cal
  A})\setminus {\cal S}_0'$.
We partition $V(G)$ into $V_1$ and $V_2$ such that
\begin{itemize}
\item $|X_i\cap V_1|-|X_i\cap V_2|=1$ if $X_i\in {\cal S}_1\setminus {\cal A}$,
\item $|X_i\cap V_1|-|X_i\cap V_2|=2$ if $X_i\in {\cal S}_0'$,
\item $|X_i\cap V_1|-|X_i\cap V_2|=0$ if $X_i\subseteq {\cal S}_0''$,
  and
\item $X_i\subseteq V_1$ if $X_i\in {\cal A}\setminus {\cal A}'$, and
  $X_i\subseteq V_2$ if $X_i\in {\cal A}'$.
\end{itemize}
Then $$|V_1|-|V_2|=2|{\cal S}_0'|+|{\cal S}_1\setminus A|+s({\cal A})-2s({\cal
  A}')=1.$$

Let $H$ be the bisection of $G$ with partition sets $V_1$ and $V_2$
and edge set  $E(H)=\{uv\in E(G):  u\in V_1\mbox{ and } v\in V_2\}$.
Next, we show that $H$ is a good bisection of $G$.
Note that, for each $v\in X_i\subseteq V(G)$,
$d_G(v)=|\overline{X_i}|$,
$d_H(v)=|\overline{X_i}\cap V_1|$ if $v\in V_2$,  and $d_H(v)=
|\overline{X_i}\cap V_2|$ if $v\in V_1$.
Also note that
$$|\overline{X_i}\cap V_1|-|\overline{X_i}\cap V_2|=(|V_1|-|V_2|)-(|X_i\cap V_1|-|X_i\cap V_2|)=1-(|X_i\cap V_1|-|X_i\cap V_2|).$$

If $v\in X_i$ for some $X_i\in {\cal S}_1\setminus {\cal A}$, then
$|\overline{X_i}\cap V_1|-|\overline{X_i}\cap V_2|=1-1=0$; so
$d_H(v)=d_G(v)/2$.

If $v\in X_i$ for some $X_i\in {\cal S}_0'$ then
$|\overline{X_i}\cap V_1|-|\overline{X_i}\cap V_2|=1-2=-1$; so
$d_H(v)\ge (d_G(v)-1)/2$.

If $v\in X_i$ for some $X_i\in {\cal S}_0''$ then
$|\overline{X_i}\cap V_1|-|\overline{X_i}\cap V_2|=1-0=1$; so
$d_H(v)\ge (d_G(v)-1)/2$.

If $v\in X_i\cap V_2$ for some $X_i\in {\cal A}$ then
$|\overline{X_i}\cap V_1|-|\overline{X_i}\cap V_2|=1+|X_i|>0$; so
$d_H(v)=|\overline{X_i}\cap V_1|\ge d_G(v)/2$.

Finally, suppose $v\in X_i\cap V_1$ for some $X_i\in {\cal A}$. Then
$|\overline{X_i}\cap V_1|-|\overline{X_i}\cap V_2|=1-|X_i|$, i.e.,
$|\overline{X_i}\cap V_2|-|\overline{X_i}\cap V_1|=|X_i|-1\ge 0$. This
implies that $d_H(v)=|\overline{X_i}\cap V_2|\ge d_G(v)/2$.
\end{proof}



\begin{proof} [Proof of Proposition \ref{prop:BS-3partite}]
Let $G=K_{r_1,r_2,r_3}$ and ${\cal X}=\{X_1,X_2,X_3\}$ such that
$X_1,X_2,X_3$ are the partition sets of $G$ and $|X_i|=r_i$ for $i\in
[3]$.   Let ${\cal S}_0=\{X_i: i\in [3] \mbox{ and } |X_i|\equiv 0 \mod 2\}$ and
${\cal S}_1=\{X_i: i\in [3] \mbox{ and } |X_i|\equiv 1 \mod 2\}$. Then
${\cal S}_0=\emptyset$ and ${\cal S}_1={\cal X}$.

If ${\cal X}$ has no good subset then the assertion follows from
Proposition~\ref{t2}. So assume that ${\cal A}$ is a good subset of
${\cal X}$ with ${\cal A}'\subseteq {\cal A}$ such that $s({\cal
  A}')=s({\cal A})/2+(m+2n-1)/2$, where $m=|{\cal S}_1\setminus {\cal
  A}|=3-|{\cal A}|$ and
$n\le |{\cal S}_0\setminus {\cal A}|=0$.
So  $$s({\cal A}')=s({\cal A})/2-|{\cal A}|/2 +1.$$

It is easy to see that ${\cal A}\ne \emptyset$.  Since $r_i\ge 3\ge
|{\cal A}|$,  ${\cal A}'\ne \emptyset$ and, hence, $|{\cal A}|\ne
1$. Since $r_1,r_2,r_3$ are all distinct, $|{\cal A}|\ne 2$. So
$|{\cal A}|=3$. Now a straightforward analysis shows that
for some $i\in [3]$,   $r_i\in\{\lfloor(r_1+r_2+r_3)/2\rfloor,
\lceil(r_1+r_2+r_3)/2\rceil\}$. This is a contradiction.
\end{proof}



Proposition  \ref{t2} characterizes  those complete  multipartite graphs
which do not have a good bisection. The next result says that there are more such
examples.

\begin{prop}\label{t3}
Let $G=K_{r_1,\dots,r_k}$ where $r_i\geq 7$ for every $i\in [k]$.
Suppose $G$ does not have a good bisection. Then for any edge $e\in E(G)$,
 $G-e$ does not have a good bisection.
\end{prop}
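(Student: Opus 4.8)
The plan is to prove the contrapositive in a disguised form: assuming $G$ has no good bisection, I want to show $G-e$ has no good bisection either. The natural strategy is to suppose for contradiction that $G-e$ does admit a good bisection $H$, and then show that $H$ (viewed as a bisection of $G$, possibly after a tiny local modification) is already good for $G$, contradicting the hypothesis on $G$. The only difference between $G$ and $G-e$ is the single edge $e=uv$, where $u\in X_p$ and $v\in X_q$ for some $p\neq q$ (the endpoints lie in distinct parts since $G$ is complete multipartite). So $d_G(w)=d_{G-e}(w)$ for every vertex $w\notin\{u,v\}$, while $d_G(u)=d_{G-e}(u)+1$ and $d_G(v)=d_{G-e}(v)+1$. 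The good-bisection inequality $2d_H(w)\ge d_G(w)-1$ can therefore fail, when we pass from $G-e$ to $G$, only at $u$ or at $v$, and only when the inequality was tight in $G-e$.

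First I would set up the degree bookkeeping precisely. For $w\notin\{u,v\}$ the inequality $2d_H(w)\ge d_{G-e}(w)-1=d_G(w)-1$ carries over unchanged, so $H$ remains good at all such vertices. The potential trouble is at $u$: we need $2d_H(u)\ge d_G(u)-1=d_{G-e}(u)$, but goodness of $H$ in $G-e$ only gives $2d_H(u)\ge d_{G-e}(u)-1$. Thus $H$ fails at $u$ in $G$ precisely when $2d_H(u)=d_{G-e}(u)-1$, i.e.\ when $d_{G-e}(u)$ is odd and $u$ sits on exactly the floor threshold; the same analysis applies to $v$. Note that if both endpoints remain fine there is nothing to prove, so I may assume at least one of $u,v$ is a ``bad'' vertex for $G$.

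The heart of the argument is a swapping move. If $u$ is bad, then $u$ is (relative to its own part $X_p$) maximally starved: $d_H(u)=(d_{G-e}(u)-1)/2=\lfloor d_{G-e}(u)/2\rfloor$, meaning $u$ has fewer neighbours on the opposite side than on its own. I would look for another vertex $u'$ in the same part $X_p$ that has a strict surplus, i.e.\ $2d_H(u')\ge d_G(u')+1$; swapping $u$ and $u'$ between $V_1$ and $V_2$ preserves the bisection sizes (both are in $X_p$, so all their external neighbours are identical), keeps every other vertex's $H$-degree unchanged, raises $d_H(u)$ by the deficit and still leaves $u'$ good. The hypothesis $r_i\ge 7$ is what guarantees enough room inside each part: since $|X_p|=r_p\ge 7$, the vertices of $X_p$ split between the two sides, and a counting argument on $\sum_{x\in X_p}(2d_H(x)-d_G(x))$ forces the existence of a surplus vertex $u'$ to trade with whenever a starved $u$ exists. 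I expect the main obstacle to be handling the case where \emph{both} $u$ and $v$ are bad simultaneously, together with the edge cases $p=q$-adjacency constraints and ensuring the swap for $u$ does not spoil the fix for $v$; one resolves this by performing the two swaps inside the two distinct parts $X_p,X_q$ independently (they share no vertices and the external degrees are unaffected), and by verifying via the counting identity that each part of size $\ge 7$ supplies its own surplus vertex. After both swaps, the modified bisection is good for all of $G$, contradicting the assumption that $G$ has no good bisection, and the proposition follows.
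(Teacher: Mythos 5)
Your setup (only $u,v$ can go bad, and only when the inequality is tight, forcing $d_{G-e}(u)$ odd) matches the paper, but the repair step at the heart of your argument fails. In a complete multipartite graph, every vertex of $X_p$ has the same neighbourhood outside $X_p$, so $d_H(x)$ for $x\in X_p$ is determined entirely by which side of the bisection $x$ lies on: it equals $|\overline{X_p}\cap V_2|$ if $x\in V_1$ and $|\overline{X_p}\cap V_1|$ if $x\in V_2$ (up to the one missing edge at $u$). Hence a ``surplus'' is not a property a vertex carries with it across a swap. If $u\in X_p\cap V_1$ is starved, the analysis shows $|\overline{X_p}\cap V_2|=|\overline{X_p}|/2-1$ and $|\overline{X_p}\cap V_1|=|\overline{X_p}|/2+1$; the only surplus vertices of $X_p$ are those in $V_2$, and after you swap such a $u'$ into $V_1$ it acquires $H$-degree $|\overline{X_p}\cap V_2|=|\overline{X_p}|/2-1$ against full degree $|\overline{X_p}|$, giving $2d_H(u')-d_G(u')=-2$. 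The swap does not repair the bisection; it transfers the deficit to $u'$ (and in fact worsens it, since $u'$, unlike $u$, did not lose an edge). Your claim that the swap ``still leaves $u'$ good'' is therefore false, and no choice of $u'$ within $X_p$ can work.

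The counting you gesture at, done carefully, actually reveals the correct (and simpler) argument, which is the one in the paper: no repair is needed because the hypothesized good bisection of $G-e$ cannot exist at all. If $u\in X_p\cap V_1$ is tight, then any \emph{other} vertex $v_1\in X_p\cap V_1$ satisfies $d_{H'}(v_1)=|\overline{X_p}\cap V_2|=|\overline{X_p}|/2-1$ while $d_{G-e}(v_1)=|\overline{X_p}|$, so $v_1$ already violates $2d_{H'}(v_1)\ge d_{G-e}(v_1)-1$ in $G-e$ itself. The role of $r_i\ge 7$ is exactly to guarantee such a $v_1$ exists: the imbalance $|\overline{X_p}\cap V_1|-|\overline{X_p}\cap V_2|=2$ together with $||V_1|-|V_2||=1$ bounds $||X_p\cap V_1|-|X_p\cap V_2||$ by $3$, so $|X_p\cap V_1|\ge 2$. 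You should redirect the contradiction at the goodness of $H'$ for $G-e$ rather than trying to build a good bisection of $G$.
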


\begin{proof}
Assume, to the contrary, that $G'=G-e$ has a good bisection $H'$ with
partition sets $V_1, V_2$.
We may assume that $E(H')=\{xy\in E(G') :  x\in V_1 \mbox{ and } y \in V_2\}$.
Then for every vertex $v\in V(G')$,
$d_{H'}(v)\geq (d_{G'}(v)-1)/2$.
Let $H$ be the bisection of $G$ with partition sets $V_1$ and $V_2$
such that  $E(H)=\{xy\in E(G):  x\in V_1 \mbox{ and } y\in V_2\}$.
Let $e=uw$.

Then $d_{G}(v)=d_{G'}(v)$ for all $v\in V(G)\setminus\{u,w\}$,
and $d_{G}(v)=d_{G'}(v)+1$ for $v\in \{u,w\}$.
Also, we have $d_{H}(v)=d_{H'}(v)$ for all $v\in V(G)\setminus\{u,w\}$,
and $d_{H'}(v)\leq d_{H}(v)\leq d_{H'}(v)+1$ for $v\in \{u,w\}$.

Since $H$ is not a good bisection of $G$, there exists a vertex $v\in V(G)$ such that
$d_{H}(v)<(d_{G}(v)-1)/2$.
So we have
$$
(d_{G'}(v)-1)/2\leq d_{H'}(v)\leq
d_{H}(v)<(d_{G}(v)-1)/2\leq d_{G'}(v)/2$$
which implies that $d_{G'}(v)$ is odd (since $d_H(v)$ is an integer),
$d_{G}(v)=d_{G'}(v)+1$, and
$(d_{G'}(v)-1)/2=d_{H'}(v)=
d_{H}(v)$. Since
$d_{G}(v)=d_{G'}(v)+1$,  $v\in\{u,w\}$.

Assume, without loss of
generality, that $v=u\in X_i\cap V_1$, where $X_1,\ldots, X_k$ are the
partition sets of $G$. (Then $w\notin X_i$ as $uw\in E(G)$.)
So $d_{G'}(u)=|\overline{X_i}|-1$, $|\overline{X_i}|$ is even, and
$w\in V_1$.
Thus,
$|V_2\cap\overline{X_i}|=d_{H'}(u)=(d_{G'}(u)-1)/2=|\overline{X_i}|/2-1.$
Therefore,
$
|V_1\cap\overline{X_i}|=|\overline{X_i}|-|V_2\cap\overline{X_i}|=|\overline{X_i}|/2+1.
$
So $|V_1\cap\overline{X_i}|-|V_2\cap\overline{X_i}|=2$.
Because $||V_1|-|V_2||=1$, $||V_1\cap X_i|-|V_2\cap X_i||\leq 3$.
Since $|X_i|\geq7$, $|V_1\cap X_i|\geq 2$.
Therefore, there exists a vertex $v_1\in V_1\cap X_i$ such that $v_1\not=u$.
Also, $v_1\ne w$ since $w\notin X_i$.
Then $d_{G'}(v_1)=d_{G}(v_1)=|\overline{X_i}|$ is even.
Thus $d_{H'}(v_1)=|V_2\cap\overline{X_i}|=|\overline{X_i}|/2-1
<(d_{G'}(v_1)-1)/2$.
This contradicts the assumption that $H'$ is a good bisection of $G'$.
\end{proof}

\section{Scott's questions on bipartitions}
In this section, we address two questions of Scott \cite{Sc05} on bipartitions of graphs.
First, we prove Theorem \ref{thm:norm} on $\ell_\lambda$-norm of
bipartitions (with $\lambda\ge 1$),  for which
we need a result of Bollob\'{a}s and Scott \cite{bs99} on judicious
bipartitions. Recall the definition of $t(m)=\sqrt{m/2+1/16}-1/4$.

\begin{lem} [Bollob\'{a}s and Scott] \label{thm:BS99}
Let $G$ be a graph with $m$ edges. Then there exists a
bipartition $V(G)=V_1\cup V_2$ such that $e(V_1,V_2)\ge m/2+t(m)/2$ and $\max\{e(V_1),e(V_2)\}\le m/4+t(m)/4$.
Moreover, if for every such bipartition $V(G)=V_1\cup V_2$ it always holds that $\max\{e(V_1),e(V_2)\}=m/4+t(m)/4$,
then $G$ must be a complete graph of odd order.
\end{lem}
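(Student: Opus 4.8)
The plan is to treat the two assertions separately. For existence, the cut bound $e(V_1,V_2)\ge m/2+t(m)/2$ is precisely Edwards' theorem quoted in the introduction, so all the difficulty is concentrated in the bound $\max\{e(V_1),e(V_2)\}\le m/4+t(m)/4$ on a partition that already realizes Edwards' bound. I would first record the algebraic identity $m/4+t(m)/4=\tfrac12(m/2+t(m)/2)$, so that the inner bound is exactly half the cut bound. The extremal object I would work with is a bipartition $V_1,V_2$ that minimizes $\max\{e(V_1),e(V_2)\}$ among all bipartitions with $e(V_1,V_2)\ge m/2+t(m)/2$ (such partitions exist by Edwards). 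Write $c=e(V_1,V_2)$, assume $e(V_1)\ge e(V_2)$, put $M=e(V_1)$ and $D=e(V_1)-e(V_2)\ge 0$, and let $\delta=c-(m/2+t(m)/2)\ge 0$ be the slack over Edwards' bound; here $d_{V_j}(v)$ denotes the number of neighbors of $v$ in $V_j$.

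The engine is a local-move analysis. Moving a vertex $v\in V_1$ to $V_2$ sends $(e(V_1),e(V_2))$ to $(e(V_1)-d_{V_1}(v),\,e(V_2)+d_{V_2}(v))$ and changes the cut by $d_{V_1}(v)-d_{V_2}(v)$; the move keeps the partition Edwards-feasible exactly when $d_{V_2}(v)-d_{V_1}(v)\le\delta$. For every such \emph{movable} $v$ the minimality of $M$ forbids the move from lowering the maximum, and since the new value of $e(V_1)$ can only drop, this forces $e(V_2)+d_{V_2}(v)\ge M$, that is $d_{V_2}(v)\ge D$. Two regimes are then immediate: if a max-cut partition is feasible with $c$ near $m/2+t(m)/2$ one uses the auxiliary inequality $2e(V_1)\le c$ (from $d_{V_1}(v)\le d_{V_2}(v)$ summed over $V_1$) to get $M\le c/2=m/4+t(m)/4$, while if $c$ is very large one uses $M\le e(V_1)+e(V_2)=m-c$. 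The intermediate range is the crux: there I would sum the relation $d_{V_2}(v)\ge D$ over the movable vertices of $V_1$ against $c=\sum_{v\in V_1}d_{V_2}(v)$ to bound $D$, and then substitute into $M=(m-c+D)/2$ to reach $M\le m/4+t(m)/4$. I expect the main obstacle to be exactly this combined control: the cut bound pushes toward max-cut partitions while the inner bound pushes toward balanced, min-max partitions, and these two preferences pull against each other, so the case analysis on the size of the slack $\delta$ and on the non-movable vertices of $V_1$ is where the argument is delicate.

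For the rigidity statement I would argue the contrapositive: if $G$ is not a complete graph of odd order, I would exhibit a feasible bipartition with $\max\{e(V_1),e(V_2)\}$ strictly below $m/4+t(m)/4$. Starting from the extremal partition above, if its value is already strictly below the target we are finished, so I may assume equality and then run every inequality used in the existence proof as an equality. Equality in $2e(V_1)\le c$ forces $d_{V_1}(v)=d_{V_2}(v)$ for \emph{every} $v\in V_1$, the degree-sum equalities force near-regularity, and tightness of Edwards' bound forces, through its known extremal characterization, that $G$ be complete of odd order. The remaining task is to show that any deviation from $K_n$ with $n$ odd (a missing edge, an extra part, or unequal part sizes) breaks one of these equalities and can be converted into a bipartition that is feasible yet strictly better. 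I expect this to be the second and harder obstacle, since it requires ruling out every near-extremal graph rather than a single one.
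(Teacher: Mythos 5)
First, for orientation: the paper does not prove this lemma at all --- it is invoked as a known theorem of Bollob\'as and Scott, cited to \cite{bs99}, and Section 4 uses it as a black box. So your proposal is not to be compared with an in-paper argument; it is an attempt to reprove the main theorem of \cite{bs99}, and judged on its own terms it is a plan rather than a proof: the two places you yourself flag as ``expected obstacles'' are precisely the content of the theorem, and neither is resolved.

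Concretely, the gaps are these. (1) The ``two immediate regimes'' cover almost nothing. For a max-cut partition the inequality $2e(V_1)\le c$ gives $M\le c/2$, which is at most $m/4+t(m)/4$ only if the maximum cut equals Edwards' bound exactly; any surplus makes $c/2$ overshoot the target. The complementary bound $M\le m-c$ helps only when $c\ge 3m/4-t(m)/4$. Thus for a disjoint union of triangles, whose maximum cut is exactly $2m/3$, the second regime is vacuous and the first gives only $M\le m/3$; the entire burden falls on your ``intermediate range''. (2) In that range your argument does not close. Summing $d_{V_2}(v)\ge D$ over the movable vertices of $V_1$ gives $|S|\cdot D\le c$, where $S$ is the set of movable vertices with $d_{V_1}(v)\ge 1$ (vertices with $d_{V_1}(v)=0$ yield no constraint, since moving them cannot decrease $e(V_1)$). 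What is needed is $D\le \delta+t(m)$, so you would need $|S|\ge c/(\delta+t(m))$, a lower bound of order $\sqrt{m}$ on the number of such vertices, and nothing in the minimality of the partition supplies it: the non-movable vertices and the zero-internal-degree vertices are unconstrained. This is not a technicality --- extracting the exact second-order term $t(m)/4$ is the whole theorem, and local-switching-plus-counting arguments of this kind are exactly the ones that lose constant factors (as the $m/3$ computation shows); the proof in \cite{bs99} is instead a delicate induction. (3) The rigidity part inherits the gap: you cannot ``run every inequality as an equality'' through an argument that does not yet exist, and you additionally invoke the extremal characterization of Edwards' bound as ``known'', which is itself a theorem that would need proof and is essentially what is being established. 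For the purposes of this paper, the correct treatment of the lemma is what the authors do: cite \cite{bs99}.
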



\begin{proof}[Proof of Theorem \ref{thm:norm}.]
Let $\lambda\ge 1$ and let $G$ be a graph with $m$ edges.
By Lemma~\ref{thm:BS99}, there is a bipartition $V_1,V_2$ of $V(G)$ such
that $e(V_1,V_2)\ge m/2+t(m)/2$ and, for $i\in [2]$, $e(V_i)\le
m/4+t(m)/4=\binom{t(m)+1}{2}$.
Note that $$e(V_1)+e(V_2)=m-e(V_1,V_2)\le m/2-t(m)/2=t(m)^2.$$
Without loss of generality, we assume that $e(V_1)\ge e(V_2)$. Then
$e(V_2)\le t(m)^2/2$.

We claim that $e(V_1)^\lambda+e(V_2)^\lambda\le {t(m)\choose 2}^\lambda+{t(m)+1\choose 2}^\lambda.$
This is true if  $e(V_2)\le \binom{t(m)}{2}$. So we may assume
$\binom{t(m)}{2}\le e(V_2)\le t(m)^2/2$. For $\lambda \geq 1$,
the function $f(x)=(t(m)^2-x)^\lambda+x^\lambda$ is strictly decreasing when $\binom{t(m)}{2}\le x\le t(m)^2/2.$
Therefore,
$$e(V_1)^\lambda+e(V_2)^\lambda \le (t(m)^2-e(V_2))^\lambda+e(V_2)^\lambda \le \binom{t(m)}{2}^\lambda +\binom{t(m)+1}{2}^\lambda.$$


Now assume that for every bipartition $V_1,V_2$ of $V(G)$, we have
$e(V_1)^\lambda+e(V_2)^\lambda= {t(m)\choose 2}^\lambda+{t(m)+1\choose
  2}^\lambda.$ Then it follows from the above arguments, $e(V_2)=\binom{t(m)}{2}$ and  $e(V_1)+e(V_2)=t(m)^2$.
So $\max\{e(V_1),e(V_2)\}=e(V_1)=t(m)^2-\binom{t(m)}{2}=m/4+t(m)/4$.
By Lemma~\ref{thm:BS99}, $G$ is a complete graph of odd order.
\end{proof}

\noindent {\bf Remark.} From the above proof, we see that actually
$V(G)$ has a bipartition $V_1, V_2$ such that $e(V_1)^\lambda+e(V_2)^\lambda\le  {t(m)\choose 2}^\lambda+{t(m)+1\choose
  2}^\lambda$ for all  $\lambda\ge 1$.

\medskip

To extend Theorem \ref{thm:norm} to $k$-partitions for  $k\ge
3$, we need the following result of Xu and Yu \cite{XY} on
$k$-partitions.
\begin{lem} [Xu and Yu] \label{xuyu-k}
Let $G$ be a graph with $m$ edges and let $k\ge 3$ be a positive
integer. Then there exists a $k$-partition $V(G)=V_1\cup ...\cup V_k$ such that
$$e(V_1,\ldots, V_k)\ge
\frac{k-1}{k}m+\frac{k-1}{k}t(m)-\frac{17k}{8},$$
and for $i\in [k]$,
$$e(V_i)\le \frac{m}{k^2}+\frac{k-1}{k^2}t(m).$$
\end{lem}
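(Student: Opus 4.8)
The plan is to take a $k$-partition $V_1,\dots,V_k$ that first maximizes the $k$-cut $e(V_1,\ldots,V_k)$ and, subject to that, is extremal for a secondary objective (for instance, minimizing $\sum_i e(V_i)^2$, which penalizes any single part carrying too many internal edges). The two displayed bounds are dual, since $\sum_i e(V_i)=m-e(V_1,\ldots,V_k)$, so the cut lower bound and the judicious upper bounds are two facets of controlling the internal edges; the essential difficulty is that maximizing the cut only controls the total $\sum_i e(V_i)$, whereas the judicious bound must control each $e(V_i)$ separately.

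First I would establish the cut lower bound. Local optimality of a maximum $k$-cut gives, for every vertex $v\in V_i$, that $v$ has no more neighbors inside $V_i$ than inside any other part; hence its internal degree is at most $d(v)/k$, and summing over vertices yields $\sum_i e(V_i)\le m/k$, that is, $e(V_1,\ldots,V_k)\ge \frac{k-1}{k}m$. To upgrade this to the Edwards-type bound with the additive $\frac{k-1}{k}t(m)$ term (and the $-\frac{17k}{8}$ slack), I would follow the quadratic/variance argument underlying Edwards \cite{ed}: when all the single-vertex inequalities are nearly tight the partition is nearly equitable, and a convexity estimate on the degree distribution forces a surplus of order $t(m)$ over the $\frac{k-1}{k}m$ baseline, with the $O(k)$ loss absorbed into the $-\frac{17k}{8}$ term. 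Alternatively one can bootstrap from the bipartition bound of Lemma~\ref{thm:BS99}, splitting $V(G)$ into two halves and recursing while tracking how the $t(m)$ surplus propagates; either route is routine bookkeeping once the right extremal partition is fixed.

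The main obstacle is the judicious upper bound $e(V_i)\le \frac{m}{k^2}+\frac{k-1}{k^2}t(m)$ for each individual part. Maximizing the cut keeps the \emph{total} internal edge count near $m/k$ but does nothing to stop one part, say $V_j$, from hoarding internal edges while others are nearly independent. To handle this I would exploit the secondary extremality: in a maximum-cut partition minimizing $\sum_i e(V_i)^2$, suppose some $e(V_j)$ exceeds the target. Since the target exceeds the average $\tfrac1k\sum_i e(V_i)$ by about $\frac{2(k-1)}{k^2}t(m)$, there must then be a correspondingly sparse part $V_\ell$, and I would locate a vertex of $V_j$ whose relocation to $V_\ell$ (or a swap between the two parts) preserves the maximum cut yet strictly decreases $\sum_i e(V_i)^2$, contradicting extremality. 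Guaranteeing that such a profitable vertex exists \emph{without ever dropping below the maximum cut} is the delicate point: one must combine the slack between $e(V_j)$ and the average with the local internal-degree inequalities, and it is precisely here that the explicit error term $-\frac{17k}{8}$ is paid. Once every part meets the target, the cut bound follows from $e(V_1,\ldots,V_k)=m-\sum_i e(V_i)$ together with the first step, completing the proof.
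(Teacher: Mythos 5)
First, a point of order: the paper does not prove Lemma~\ref{xuyu-k} at all --- it is quoted from Xu and Yu \cite{XY} and used as a black box in the proof of Theorem~\ref{thm:norm-kpar}. So there is no in-paper proof to compare against; your attempt has to be measured against the original argument in \cite{XY}, which is a substantial result in its own right.

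As a proof, your proposal has a genuine gap, and you essentially flag it yourself. The entire content of the lemma is that a \emph{single} partition satisfies the cut lower bound and every per-part upper bound simultaneously; as you note, maximizing the cut only controls $\sum_i e(V_i)$, and the target $e(V_i)\le \frac{m}{k^2}+\frac{k-1}{k^2}t(m)$ for an individual part is roughly a factor of $k$ below what the total alone guarantees. Your plan for closing this gap --- take a maximum $k$-cut, minimize $\sum_i e(V_i)^2$ subject to that, and argue that if some part is too dense there is a vertex whose relocation preserves the cut while decreasing the secondary objective --- is precisely the step you yourself call ``the delicate point,'' and you give no argument that such a vertex exists. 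The obvious local moves do not work: in a maximum $k$-cut every vertex has at least as many neighbours in each other part as in its own, so moving a vertex out of the dense part typically \emph{strictly decreases} the cut, and there is no a priori reason that any cut-preserving profitable move exists. A similar objection applies to your first half: upgrading $e(V_1,\dots,V_k)\ge\frac{k-1}{k}m$ by the Edwards-type surplus $\frac{k-1}{k}t(m)$ is not ``routine bookkeeping,'' because the surplus and the per-part bounds must come out of the same partition; the specific error term $-\frac{17k}{8}$ is the residue of a careful quantitative analysis in \cite{XY} building on the Bollob\'as--Scott machinery, not of casually absorbed $O(k)$ losses. In short, your proposal is a reasonable roadmap of what must be proved, but the two steps that carry all of the difficulty are asserted rather than established.
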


We now determine the $\ell_\lambda$-norm (where $\lambda\ge 1$) for $k$-partitions,
up to an additive term $O(m^{\lambda-1})$. The proof is similar to the bipartition case.

\begin{theo}\label{thm:norm-kpar}
Let $k\ge 3$ be an integer and $\lambda\ge 1$ be a real number. Then any graph $G$ with $m$ edges has a $k$-partition
$V(G)=V_1\cup ...\cup V_k$ such that
$$\sum_{i=1}^k e(V_i)^\lambda\le \frac{1}{k^{2\lambda-1}}m^\lambda -
\frac{(k-1)\lambda}{k^{2\lambda-1}} m^{\lambda-1} t(m)+O(m^{\lambda-1}).$$
\end{theo}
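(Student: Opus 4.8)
The plan is to mimic the bipartition argument from Theorem~\ref{thm:norm}, replacing Lemma~\ref{thm:BS99} with the $k$-partition result of Xu and Yu (Lemma~\ref{xuyu-k}). First I would fix the $k$-partition $V_1,\ldots,V_k$ guaranteed by Lemma~\ref{xuyu-k}, so that the total number of internal edges satisfies
$$\sum_{i=1}^k e(V_i)=m-e(V_1,\ldots,V_k)\le \frac{m}{k}-\frac{k-1}{k}t(m)+\frac{17k}{8},$$
and each individual part obeys $e(V_i)\le \frac{m}{k^2}+\frac{k-1}{k^2}t(m)$. Write $s:=\sum_i e(V_i)$ for the total and $M:=\frac{m}{k^2}+\frac{k-1}{k^2}t(m)$ for the per-part cap. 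The goal is to bound $\sum_i e(V_i)^\lambda$ from above using only these two constraints.

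The key step is a convexity/majorization argument. Since $f(x)=x^\lambda$ is convex for $\lambda\ge 1$, the sum $\sum_i e(V_i)^\lambda$, subject to a fixed total $s$ and the box constraints $0\le e(V_i)\le M$, is maximized at an extreme point of the feasible polytope — that is, when as many coordinates as possible are pushed to the cap $M$ and the remainder to $0$. Concretely I would argue that the maximum is attained by a configuration with some number of parts equal to $M$, at most one fractional part, and the rest $0$; the number of parts at the cap is roughly $s/M$. Plugging $s\approx \frac{m}{k}-\frac{k-1}{k}t(m)$ and $M\approx \frac{m}{k^2}+\frac{k-1}{k^2}t(m)$ gives that about $k$ parts sit near $M$, and then
$$\sum_{i=1}^k e(V_i)^\lambda \lesssim k\,M^\lambda = k\left(\frac{m}{k^2}+\frac{k-1}{k^2}t(m)\right)^\lambda.$$
Expanding this $\lambda$th power via the binomial/Taylor estimate $(A+B)^\lambda=A^\lambda+\lambda A^{\lambda-1}B+O(A^{\lambda-2}B^2)$ with $A=\frac{m}{k^2}$ and $B=\frac{k-1}{k^2}t(m)$, and using $t(m)=\Theta(m^{1/2})$ so that the quadratic-and-lower terms are $O(m^{\lambda-1})$, yields the main term $\frac{1}{k^{2\lambda-1}}m^\lambda$, the linear correction $\frac{(k-1)\lambda}{k^{2\lambda-1}}m^{\lambda-1}t(m)$, and an $O(m^{\lambda-1})$ error.

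I would track the signs carefully: the theorem claims the coefficient of $m^{\lambda-1}t(m)$ is \emph{negative}, namely $-\frac{(k-1)\lambda}{k^{2\lambda-1}}$. Pushing coordinates to the cap $M$ overestimates the sum, and the per-part bound $M$ itself carries a $+\frac{k-1}{k^2}t(m)$ term; the slack comes from the fact that the total $s$ is \emph{smaller} than $kM$ by a term of order $t(m)$. That is, $kM-s=\frac{k-1}{k}t(m)+\frac{k-1}{k}t(m)-\frac{17k}{8}=\Theta(t(m))$, so one cannot actually place all $k$ parts at the full cap — the deficit forces the extremal configuration to fall short of $kM^\lambda$ by an amount of order $\lambda M^{\lambda-1}t(m)=\Theta(m^{\lambda-1}t(m))$, and it is precisely this deficit that produces the negative linear term. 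The cleanest way to make this rigorous is to bound $\sum_i e(V_i)^\lambda\le s\cdot M^{\lambda-1}$ (using $e(V_i)^\lambda=e(V_i)\cdot e(V_i)^{\lambda-1}\le e(V_i)M^{\lambda-1}$ for each $i$), then substitute the expressions for $s$ and $M^{\lambda-1}$ and expand.

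The main obstacle I anticipate is verifying that the crude bound $\sum_i e(V_i)^\lambda\le s\,M^{\lambda-1}$ is tight enough to capture the correct coefficient $-\frac{(k-1)\lambda}{k^{2\lambda-1}}$ of the $m^{\lambda-1}t(m)$ term, rather than just giving the right leading order. Writing $s\,M^{\lambda-1}$ out,
$$s\,M^{\lambda-1}\le\left(\tfrac{m}{k}-\tfrac{k-1}{k}t(m)+O(1)\right)\left(\tfrac{m}{k^2}\right)^{\lambda-1}\!\left(1+\tfrac{(k-1)t(m)}{m}\right)^{\lambda-1},$$
and the two $t(m)$-contributions (one from $s$, scaling as $-\frac{k-1}{k}t(m)\cdot(\tfrac{m}{k^2})^{\lambda-1}$, and one from the $(\lambda-1)$ expansion of $M^{\lambda-1}$, scaling as $+\frac{(\lambda-1)(k-1)}{m}t(m)\cdot\frac{m}{k}(\tfrac{m}{k^2})^{\lambda-1}$) must be combined. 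Their sum is $\left(-\tfrac{k-1}{k}+\tfrac{(\lambda-1)(k-1)}{k}\right)t(m)\,(\tfrac{m}{k^2})^{\lambda-1}=\tfrac{(k-1)(\lambda-2)}{k}t(m)\,(\tfrac{m}{k^2})^{\lambda-1}$, which does \emph{not} match the claimed $-(k-1)\lambda$ coefficient. This tells me the product bound $s\,M^{\lambda-1}$ is too lossy and I must instead use the genuine extremal (majorization) configuration — placing exactly $k$ parts and letting the total deficit $kM-s$ be subtracted from one coordinate — and expand $(M-\delta)^\lambda=M^\lambda-\lambda M^{\lambda-1}\delta+O(M^{\lambda-2}\delta^2)$ with $\delta=kM-s=\Theta(t(m))$. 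Reconciling this majorization bound with the exact stated coefficient, and confirming all discarded cross-terms are genuinely $O(m^{\lambda-1})$ (which uses $t(m)^2=\Theta(m)$), is where the care is needed; everything else follows the bipartite template verbatim.
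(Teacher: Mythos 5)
Your proposal is correct and follows essentially the same route as the paper: apply the Xu--Yu $k$-partition lemma, then use convexity of $x^\lambda$ to push the $e(V_i)$ to the extremal configuration with $k-1$ parts at the cap $M=\frac{m}{k^2}+\frac{k-1}{k^2}t(m)$ and one part absorbing the deficit $kM-s=\Theta(t(m))$, and finally Taylor-expand, with the negative coefficient of $m^{\lambda-1}t(m)$ arising exactly as you describe. Your self-correction discarding the lossy bound $s\,M^{\lambda-1}$ in favor of the genuine vertex of the polytope is the right call; the paper's shifting argument (equalizing $e(V_1),\dots,e(V_{k-1})$ and then maximizing over $\alpha$) lands on the identical configuration.
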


\begin{proof}
Let $G$ be a graph with $m$ edges.
By Lemma~\ref{xuyu-k}, there exists a $k$-partition $V(G)=V_1\cup ...\cup V_k$ such that
$$\sum_{i=1}^k e(V_i)=m-e(V_1,\ldots, V_k)\le  \frac{m}{k}-\frac{k-1}{k}t(m)+\frac{17k}{8}$$
and for $i\in [k]$,
$$e(V_i)\le \frac{m}{k^2}+\frac{k-1}{k^2}t(m).$$
Without loss of generality, let $e(V_1)\ge e(V_2)\ge \ldots \ge
e(V_k)$ and let $\alpha:=e(V_1)-m/k^2$.

If $\alpha\le -\frac{k-1}{k^2}t(m)$ then
$$\sum_{i=1}^k e(V_i)^\lambda\le k\left(\frac{m}{k^2}-\frac{k-1}{k^2}t(m)\right)^\lambda\le \frac{1}{k^{2\lambda-1}} m^\lambda-
\frac{(k-1)\lambda}{k^{2\lambda-1}} m^{\lambda-1}t(m)+O(m^{\lambda-1}).$$
So we may assume that
\begin{equation}\label{e3.6}
-\frac{k-1}{k^2}t(m)\le \alpha\le \frac{k-1}{k^2}t(m).
\end{equation}

Note that we may  assume $\sum_{i=1}^k
e(V_i)=\frac{m}{k}-\frac{k-1}{k}t(m)+\frac{17k}{8}$. Also note
that  $\sum_{i=1}^k e(V_i)^\lambda$ increases if we replace $e(V_k)$
by $e(V_k)-1$ and replaces $e(V_i)$ by $e(V_i)+1$,  for any $i\in [k-1]$.
Therefore, we may further assume that
$e(V_1)=...=e(V_{k-1})=\frac{m}{k^2}+\alpha$ and
$e(V_k)=\frac{m}{k^2}-\frac{k-1}{k}t(m)+\frac{17k}{8} -(k-1)\alpha$.
So by \eqref{e3.6}, we have
\begin{equation*}
\begin{split}
\sum_{i=1}^k e(V_i)^\lambda&\le (k-1) \left(\frac{m}{k^2}+\alpha\right)^\lambda+\left(\frac{m}{k^2}-\frac{k-1}{k}t(m)+\frac{17k}{8}-(k-1)\alpha\right)^\lambda\\
&=(k-1)\left(\frac{m}{k^2}+\alpha\right)^\lambda+\left(\frac{m}{k^2}-\frac{k-1}{k}t(m)-(k-1)\alpha\right)^\lambda+O(m^{\lambda-1})\\
&\le (k-1)\left(\frac{m}{k^2}+\frac{k-1}{k^2}t(m)\right)^\lambda +\left(\frac{m}{k^2}-\frac{k-1}{k}t(m)-\frac{(k-1)^2}{k^2}t(m)\right)^\lambda+O(m^{\lambda-1})\\
&=\frac{1}{k^{2\lambda-1}} m^\lambda-\frac{(k-1)\lambda}{k^{2\lambda-1}}m^{\lambda-1} t(m)+O(m^{\lambda-1}),
\end{split}
\end{equation*}
where the second inequality holds because the expression in the second
line is an increasing function of $\alpha$, for $-\frac{k-1}{k^2}t(m)\le \alpha\le \frac{k-1}{k^2}t(m)$.
\end{proof}

We remark that the bound in the above theorem  is tight up to the term
$O(m^{\lambda-1})$,
by considering the complete graph $K_{ks}$ which has $m=\binom{ks}{2}$ edges.
Thus $s=(2t(m)+1)/k$.
The minimum $\sum_{i=1}^k e(V_i)^\lambda$ over all $k$-partitions
$V_1, \ldots,V_k$ of $V(K_{ks})$
is attained when $|V_i|=s$ for $i\in [k]$, and this minimum value equals
$$k\binom{s}{2}^\lambda=\frac{k}{2^\lambda}\left(\frac{2t(m)+1}{k}\right)^\lambda  \left(\frac{2t(m)+1-k}{k}\right)^\lambda.$$
Using $2t(m)^2+t(m)=m$ and $t(m)=\Theta(\sqrt{m})$, we see that
$$k \binom{s}{2}^\lambda=\frac{1}{k^{2\lambda-1}} m^\lambda- \frac{(k-1)\lambda}{k^{2\lambda-1}}m^{\lambda-1}t(m)+\Theta(m^{\lambda-1}).$$

It would be interesting to find the optimal upper bound in Theorem \ref{thm:norm-kpar}.
We believe that the extremal graphs for $\ell_\lambda$-norms of $k$-partitions (where $\lambda\ge 1$)
should be the complete graphs $K_{kn+\lfloor k/2\rfloor}$.
We formulate the following question.
For fixed $\lambda\ge 1$ and integer $k\ge 2$, let $s:=s(m)$ be such that $m=\binom{ks+\lfloor k/2\rfloor}{2}$ and let
$$f_{\lambda,k}(m):= \left\lfloor \frac{k}{2}\right\rfloor\binom{s+1}{2}^\lambda+ \left\lceil \frac{k}{2}\right\rceil\binom{s}{2}^\lambda.$$

\begin{ques}
Fix any real $\lambda\ge 1$ and integer $k\ge 2$.
For any positive integer $m$, is it true that
$$\min_{V(G)=V_1\cup...\cup V_k} \sum_{i=1}^k e(V_i)^\lambda\le f_{\lambda,k}(m) $$
for all graphs $G$ with $m$ edges,
with equality if and only if $m=\binom{ks+\lfloor k/2\rfloor}{2}$ for some integer $s$?
Does the equality hold only for $K_{ks+\lfloor k/2\rfloor}$ (modulo some isolated vertices)?
\end{ques}
\noindent A result of Bollob\'as and Scott \cite{bs02} shows that this is true for $\lambda=1$ and any $k$.
Theorem \ref{thm:norm} provides an affirmative answer for the case $k=2$.

\medskip

We now turn to  the following question of Scott \cite{Sc05}.

\begin{ques}
Does every graph $G$ with
$\binom{kn}{2}$ edges have a vertex partition into $k$ sets, each of
which contains at most $\binom{n}{2}$ edges?
\end{ques}
We give a negative answer to this question in the case $k=2$. For this
we need to show that there exist an infinite sequence of pairs of
integers with certain properties.

\begin{lem}\label{lem:n+t}
There are pairs $(a_i,b_i)$ of integers for all $i\ge 0$ such that
\begin{itemize}
\item [$(i)$] $a_i\ge 36$ and $a_i$ is even, and $b_i\ge 21$ and $b_i$ is odd,
\item [$(ii)$] $3b_i(b_i-1)=a_i(a_i-1)$, and
\item [$(iii)$] $b_i\le 7a_i/12$.
\end{itemize}
\end{lem}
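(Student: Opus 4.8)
The plan is to recognize the Diophantine condition $3b(b-1)=a(a-1)$ in (ii) as a Pell-type equation and to produce infinitely many solutions via a recurrence, then to check the side conditions (i) and (iii) survive along an appropriate subsequence.

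First I would clear the quadratic in (ii) by completing the square. Writing $a(a-1)=3b(b-1)$ and multiplying by $4$ gives $(2a-1)^2-1=3\bigl((2b-1)^2-1\bigr)$, so setting $X:=2a-1$ and $Y:=2b-1$ yields the Pell equation
\begin{equation}\label{eq:pell}
X^2-3Y^2=-2.
\end{equation}
Thus the problem reduces to finding infinitely many solutions $(X,Y)$ of \eqref{eq:pell} in \emph{odd} positive integers (so that $a=(X+1)/2$ and $b=(Y+1)/2$ are integers), with $a$ even and $b$ odd, i.e. with $X\equiv 3 \pmod 4$ and $Y\equiv 1\pmod 4$. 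The base solution is $(X,Y)=(1,1)$, giving $(a,b)=(1,1)$, and the fundamental unit of $\mathbb{Z}[\sqrt 3]$ is $2+\sqrt 3$ (since $2^2-3\cdot 1^2=1$). New solutions arise from $X_{n+1}+Y_{n+1}\sqrt 3=(2+\sqrt3)(X_n+Y_n\sqrt3)$, i.e. from the linear recurrence
\begin{equation}\label{eq:rec}
X_{n+1}=2X_n+3Y_n,\qquad Y_{n+1}=X_n+2Y_n.
\end{equation}
I would iterate \eqref{eq:rec} from $(1,1)$ to generate an infinite family and then translate back to $(a_n,b_n)$.

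The remaining work is a congruence and inequality bookkeeping, which is where the care is needed. For (i), I would track the residues of $(X_n,Y_n)$ modulo $4$ under \eqref{eq:rec}: the map is periodic mod $4$, so only finitely many residue classes occur, and I would select the subsequence of indices for which $X_n\equiv 3$ and $Y_n\equiv 1\pmod4$ (equivalently $a_n$ even, $b_n$ odd); since the orbit is eventually periodic and $X_n,Y_n\to\infty$, infinitely many indices qualify, and along them $a_i\ge 36$, $b_i\ge 21$ hold for all but finitely many terms, which I discard. For (iii), the asymptotic ratio is the key: from \eqref{eq:pell} one has $a\sim \tfrac12 X$ and $b\sim\tfrac12 Y$ with $X/Y\to\sqrt3$, so $b/a\to 1/\sqrt3=0.577\ldots<7/12=0.583\ldots$. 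Hence $b_i\le 7a_i/12$ holds for all sufficiently large $i$, and I would again pass to the tail of the subsequence. The main obstacle is purely technical: ensuring that the congruence selection in (i) and the asymptotic bound in (iii) can be met \emph{simultaneously} along one common infinite subsequence, and making the inequality $b/a<7/12$ effective (rather than merely asymptotic) so that one can name an explicit threshold index beyond which all three conditions hold together. Since $\sqrt3<7/12$ strictly, the gap is bounded below, so a routine estimate on the convergence rate of $X_n/Y_n$ to $\sqrt3$ settles (iii) effectively, and combining it with the finite mod-$4$ period yields the desired infinite family.
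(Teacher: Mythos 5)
Your reduction is correct and the plan goes through. Completing the square in $a(a-1)=3b(b-1)$ does give $X^2-3Y^2=-2$ with $X=2a-1$, $Y=2b-1$, all solutions generated from $(1,1)$ by the unit $2+\sqrt3$ stay odd and positive, and the two ``routine'' verifications you defer really do work out: the orbit mod $4$ is $(1,1)\to(1,3)\to(3,3)\to(3,1)\to(1,1)$, so the desired class $X\equiv 3$, $Y\equiv 1\pmod 4$ occurs exactly at $n\equiv 3\pmod 4$; and the inequality $b\le 7a/12$, i.e.\ $12Y+5\le 7X$ with $X=\sqrt{3Y^2-2}$, reduces after squaring to $(Y-41)(Y+1)\ge 0$, which holds from the very first qualifying term $(X,Y)=(71,41)$, i.e.\ $(a,b)=(36,21)$ (with equality in (iii) there), so no terms beyond the initial segment need be discarded. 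Your route differs from the paper's in derivation rather than in substance: the paper simply posits the second-order recurrence $n_{i+1}=4n_i-n_{i-1}-1$, $t_{i+1}=4t_i-t_{i-1}-1$ with seeds $(36,21)$ and $(133,77)$, verifies by induction that two quadratic invariants vanish (which is exactly condition (ii)), and then extracts the subsequence $i\equiv 0\pmod 4$ for the parities; your Pell-equation analysis explains where that recurrence and those seeds come from (one checks $X_{n+2}=4X_{n+1}-X_n$ translates into the paper's recurrence for $a_n=(X_n+1)/2$), at the cost of invoking a little algebraic number theory. The paper's version is more self-contained and verification-only; yours is more conceptual and makes clear that these are essentially all solutions. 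Either is acceptable; just make sure in a final write-up to actually display the mod-$4$ orbit and the explicit inequality computation rather than leaving them as intentions, and fix the typo $\sqrt3<7/12$ (you mean $1/\sqrt3<7/12$).
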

\begin{proof}
We recursively define integer pairs $(n_i, t_i)$ as follows, such that
the desired sequence $\{(a_i,b_i)\}_{i\ge 0}$ will be a subsequence of
  $\{(n_i,t_i)\}_{i\ge 0}$.

Let $$(n_0, t_0)=(36,21) \text{~~ and~~} (n_1, t_1)=(133,77)$$
and, for $i\ge 1$, let
\begin{equation}\label{equ:ni-ti}
n_{i+1}=4n_i-n_{i-1}-1 \text{~~ and~~} t_{i+1}=4t_i-t_{i-1}-1.
\end{equation}
For convenience, we write $$\alpha_i:=n_i(n_i-1)-3t_i(t_i-1)$$ for
$i\ge 0$, and
$$\beta_i:=2n_in_{i-1}-n_i-n_{i-1}-6t_it_{i-1}+3t_i+3t_{i-1}+1$$
for $i\ge 1$.

We claim that $\alpha_i=0$ for $i\ge 0$ and that $\beta_i=0$ for $i\ge 1$.
By a direct calculation, we see that $\alpha_0=0$, $\alpha_1=0$ and $\beta_1=0$.
Now assume for some $i\ge 1$, we have  $\alpha_j=0$ for $j\in [i]\cup
\{0\}$, and $\beta_j=0$ for $j\in [i]$.
Using \eqref{equ:ni-ti} and the definition of $\alpha_{i+1}$ and
$\beta_{i+1}$, we have
\begin{equation*}
\begin{array}{lll}
\alpha_{i+1}&=(4n_{i}-n_{i-1}-1)(4n_{i}-n_{i-1}-2)-3(4t_{i}-t_{i-1}-1)(4t_{i}-t_{i-1}-2)\\
&=16n_{i}^2-8n_{i}n_{i-1}+n_{i-1}^2-12n_{i}+3n_{i-1}+2\\
&~~ -48t_{i}^2+24t_{i}t_{i-1}-3t_{i-1}^2+36t_{i}-9t_{i-1}-6\\
&= 16 \alpha_{i-1}+\alpha_{i-2}-4\beta_{i-1}=0,\\
~\\
\beta_{i+1}&=2 (4n_{i}-n_{i-1}-1)n_{i}-(4n_{i}-n_{i-1}-1)-n_{i} \\
&~~ -6(4t_{i}-t_{i-1}-1)t_{i}+3(4t_{i}-t_{i-1}-1)+3t_{i}+1  \\
&=8n_{i}^2-2n_{i-1}n_{i}-7n_{i}+n_{i-1}-24t_{i}^2+6t_{i-1}t_{i}+21t_{i}-3t_{i-1}-1\\
&=8\alpha_{i-1}-\beta_{i-1}=0.
\end{array}
\end{equation*}
Thus, the claim follows from induction.

From \eqref{equ:ni-ti}, we see that both $\{n_i\}_{i\ge 0}$ and
$\{t_i\}_{i\ge 0}$ are increasing sequences; so $n_i\ge 36$ and $t_i\ge 21$ for  $i\ge 0$.
Moreover,  $t_i\le 7n_i/12$ for $i\ge 0$. For otherwise,
$t_i>7n_i/12$ for some $i$. Then $i\ge 1$ and
$$3t_i(t_i-1) >3 (7n_i/12)(7n_i/12-1)
=49n_i^2/48-7n_i/4,$$
which is larger than $n_i(n_i-1)$ (since $n_i\ge 36$), a contradiction.

Using \eqref{equ:ni-ti}, it is easy to observe that
$n_i$ is even if and only if $i\equiv 0,3 \mod 4$,
and that $t_i$ is odd if and only if $i\equiv 0,1 \mod 4$.
Therefore letting $a_i=n_{4i}$ and $b_i=t_{4i}$ for $i\ge 0$, we see
that the pairs $(a_i, b_i)$ satisfy all requirements $(i)$, $(ii)$ and
$(iii)$.
\end{proof}

We now give the proof of Theorem \ref{thm:2n}.

\begin{proof}[Proof of Theorem \ref{thm:2n}.]
By Lemma \ref{lem:n+t}, there exist infinitely many  pairs $(2n,t)$
of positive integers such that $t$ is odd,  $t\le 7n/6$, and
$3t(t-1)=2n(2n-1)$.

Let $G$ be the union of three pairwise disjoint copies of the clique $K_t$. Then
$|V(G)|=3t$ and $$e(G)=\frac{3t(t-1)}{2}=n(2n-1)=\binom{2n}{2}.$$
Let $V_1,V_2$ be a bipartition of $V(G)$. Without loss of generality, we assume that $|V_1|\ge |V_2|$.

Then $G[V_1]$ is the disjoint union of three cliques, say  $K_a,K_b,$ and
$K_c$. (We set $K_0=\emptyset$.) Hence, $G[V_2]$ is the disjoint union of cliques $K_{t-a},K_{t-b}$ and $K_{t-c}$.
As $t$ is odd, we have $$a+b+c\ge \left\lceil
  |V(G)|/2\right\rceil=(3t+1)/2.$$
Choose integers $a',b',c'$ such that $a'\le a, b'\le b, c'\le c$ and $$a'+b'+c'=(3t+1)/2.$$
We also need an easy property of binomial coefficients that for any integers $m-n\ge 2$,
\begin{equation}\label{equ:binoms}
\binom{m}{2}+\binom{n}{2}> \binom{m-1}{2}+\binom{n+1}{2}.
\end{equation}
Then we have
\begin{equation*}
\begin{split}
e(V_1)&=\binom{a}{2}+\binom{b}{2}+\binom{c}{2}\\
& \geqslant \binom{a'}{2}+\binom{b'}{2}+\binom{c'}{2}\\
&\geqslant
\binom{(t+1)/2}{2}+\binom{(t+1)/2}{2}+\binom{(t-1)/2}{2}
\quad \mbox{(by \eqref{equ:binoms})}\\
&=\frac{3t^2-4t+1}{8}\\
&=\frac{4n^2-2n-t+1}{8} \quad \mbox{ (as $3t(t-1)=2n(2n-1)$)}\\
&\geqslant \binom{n}{2}+\frac{5}{48}n \quad \mbox{ (as $t\le 7n/6$).}
\end{split}
\end{equation*}
This completes the proof of Theorem \ref{thm:2n}.
\end{proof}

It seems likely that similar result holds for general $k$-partitions,
though we are not able to construct such graphs due to difficulties in
proving a more general version of Lemma~\ref{lem:n+t}.


\newpage

\end{document}